\documentclass{siamart171218}
\usepackage{amsfonts,amsmath,amssymb,graphicx,caption}
\usepackage{graphicx,epstopdf}
\makeatletter
\newcommand*\rel@kern[1]{\kern#1\dimexpr\macc@kerna}
\newcommand*\widebar[1]{%
  \begingroup
  \def\mathaccent##1##2{%
    \rel@kern{0.8}%
    \overline{\rel@kern{-0.8}\macc@nucleus\rel@kern{0.2}}%
    \rel@kern{-0.2}%
  }%
  \macc@depth\@ne
  \let\math@bgroup\@empty \let\math@egroup\macc@set@skewchar
  \mathsurround\z@ \frozen@everymath{\mathgroup\macc@group\relax}%
  \macc@set@skewchar\relax
  \let\mathaccentV\macc@nested@a
  \macc@nested@a\relax111{#1}%
  \endgroup
}
\makeatother

\DeclareMathOperator*{\argmin}{argmin}

\usepackage{color}
\usepackage{MnSymbol}
\usepackage[ruled,algo2e]{algorithm2e}


\usepackage{amsmath,amssymb}
\usepackage{cite}
\usepackage{mathtools}
\usepackage{tikz}
\usepackage{pgfplots}
\usepackage{pgfplotstable}
\usepackage{geometry}
\usepackage{color}
\usepackage{booktabs}
\usepackage{framed}
\pgfplotsset{compat=1.9}

\pgfplotstableset{
	every head row/.style={before row=\toprule,after row=\midrule},
	clear infinite
}

\usepackage{amsopn}




\renewcommand{\leq}{\leqslant}
\renewcommand{\geq}{\geqslant}

\usepackage{pgfplots}			

 \usepackage[caption=false]{subfig} %
\usepackage{float}

\usepackage{scalerel,stackengine}
\stackMath
\newcommand\reallywidehat[1]{%
\savestack{\tmpbox}{\stretchto{%
  \scaleto{%
    \scalerel*[\widthof{\ensuremath{#1}}]{\kern-.6pt\bigwedge\kern-.6pt}%
    {\rule[-\textheight/2]{1ex}{\textheight}}
  }{\textheight}%
}{0.5ex}}%
\stackon[1pt]{#1}{\tmpbox}%
}

\newcommand{\RR}{{\mathbb{R}}}

\usepackage{geometry}
\geometry{margin=1in}

\newtheorem{Ass}[theorem]{Assumption}

\newtheorem{num_example}[theorem]{Example}

\usepackage{multirow}

\title{On the Solution of the Nonsymmetric T-Riccati Equation\thanks{This work was supported by the Australian Research Council (ARC) Discovery Grant No.~DP1801038707}}

\author{Peter Benner\footnotemark[2] \and Davide Palitta\thanks{Department Computational Methods in Systems
and Control Theory (CSC),
Max Planck Institute for Dynamics of Complex Technical Systems, Magdeburg, Germany. E-mail:  \texttt{\{benner,palitta\}@mpi-magdeburg.mpg.de}}}

\begin{document}
\bibliographystyle{siam}
\maketitle

\begin{abstract}
The nonsymmetric T-Riccati equation is a quadratic matrix equation where the linear part corresponds to the so-called T-Sylvester or T-Lyapunov operator that has previously been studied in the literature. It has applications in macroeconomics and policy dynamics. So far, it presents
an unexplored problem in numerical analysis, and both, theoretical results and computational methods, are lacking in the literature.
In this paper we provide some sufficient conditions for the existence and uniqueness of a nonnegative minimal solution and its efficient computation is deeply analyzed.
Both the small-scale and the large-scale setting are addressed and Newton-Kleinman-like methods are derived. The convergence of these procedures to the minimal solution is proved and several numerical results illustrate the computational efficiency of the proposed methods.
\end{abstract}
\begin{keywords}
T-Riccati equation, M-matrices, minimal nonnegative solution, Newton-Kleinman method
\end{keywords}
\begin{AMS}
65F30, 15A24, 49M15, 39B42, 40C05
\end{AMS}


\section{Introduction}\label{Introduction}
In this paper, we consider the nonsymmetric T-Riccati operator
\begin{equation*}
\mathcal{R}_T:\mathbb{R}^{n\times n}\rightarrow\mathbb{R}^{n\times n}, \quad \mathcal{R}_T(X):=DX+X^TA-X^TBX+C,
\end{equation*}
where $A,B,C,D\in\RR^{n\times n}$ and sufficient conditions for the existence and uniqueness of a minimal solution $X_{\min}\in\mathbb{R}^{n\times n}$ to
\begin{equation}\label{eq.TRiccati}
 \mathcal{R}_T(X)=0,
\end{equation}
are provided.

The solution of the nonsymmetric T-Riccati equation \eqref{eq.TRiccati} plays a role in solving
dynamics generalized equilibrium (DSGE) problems 
\cite{BinP97,SchU04,Sim01}. ``DSGE modeling is a method in macroeconomics that
attempts to explain economic phenomena, such as economic growth and
business cycles, and the effects of economic
policy''\footnote{\url{https://en.wikipedia.org/wiki/Dynamic_stochastic_general_equilibrium}}.
Equations of the form \eqref{eq.TRiccati} appear in certain procedures
for solving DSGE models using perturbation-based methods \cite{BinP97,Sim01}.

Taking inspiration from the (inexact) Newton-Kleinman method for standard algebraic Riccati equations,
we illustrate efficient numerical procedures for solving \eqref{eq.TRiccati}. Both the small-scale and the large-scale setting are addressed.
In particular, in the latter framework, we assume the matrices $A$ and $D$ to be such that the matrix-vector products $Av$ and $Dw$ require $\mathcal{O}(n)$ floating point operations (flops) for any $v,w\in\mathbb{R}^{n}$, and $B$ and $C$ low rank. These hypotheses remind us of the usual assumptions adopted when dealing with
large-scale standard algebraic Riccati equations. See, e.g.,  \cite{Benner2018,Benner2016,BennerSaak2013,Bini2012,Feitzinger2009,Heyouni2008/09,Jbilou2003,Lin2015,Simoncini2016a,Simoncini2014,Palitta2019} and the recent survey paper \cite{Benner2018a}. Indeed, in this context,
the solution $Z$ is numerically rank deficient \cite{Benner2016a} and low-rank approximations of the form $\widebar Z\widebar Z^T\approx Z$, $\widebar Z\in\mathbb{R}^{n\times t}$, $t\ll n$, are thus expected to be accurate.
We think that also in the case of the nonsymmetric T-Riccati equation it is possible to show that the singular values of the solution
$X$ to \eqref{eq.TRiccati} present a fast decay and low-rank approximations can thus be sought. This may be proved by combining the arguments in
\cite{Benner2016a} with bounds for the decay of the singular values of the solution of certain T-Sylvester equations \cite{Dopico2016}. However, this is beyond the scope of this paper and in section~\ref{Numerical examples} we restrict ourselves to illustrate how low-rank approximations turn out to be sufficiently accurate in the examples we tested.

The following is a synopsis of the paper. In section~\ref{Existence and uniqueness of a minimal solution} we present the result about the existence and uniqueness of a minimal solution $X_{\min}$ to \eqref{eq.TRiccati}. A Newton-Kleinman method for the computation of such a $X_{\min}$ is derived in section~\ref{The (inexact) Newton-Kleinman method} and its convergence features are proved in section~\ref{A convergence result}. The large-scale setting is addressed in section~\ref{The large-scale setting} where the convergence of an inexact Newton-Kleinman method equipped with a specific line search is illustrated. Some implementation details of the latter procedure are discussed in section~\ref{Implementation details}.
Several numerical results showing the effectiveness of the proposed approaches are reported in section~\ref{Numerical examples} while our conclusions are given in section~\ref{Conlusions}.

Throughout the paper we will adopt the following notation.
 The matrix inner product is
defined as $\langle X, Y \rangle_F ∶= \mbox{trace}(Y^T X)$ so that the induced norm is $\| X\|^2_F= \langle X, X\rangle_F$.
$I_n$ denotes the identity matrix of order $n$ and the subscript
is omitted whenever the dimension of $I$ is clear from the context.
The brackets $[\cdot]$ are used to concatenate matrices of conforming dimensions. In particular, a MATLAB-like notation is adopted and $[M,N]$ denotes the matrix obtained by augmenting $M$ with $N$.
 $A\geq0$ ($A>0$) indicates a nonnegative (positive) matrix, that is a matrix whose entries are all nonnegative (positive).
 Clearly, $A\leq0$ ($A<0$) if $-A\geq0$ ($-A>0$) and $A\geq B$ if $A-B\geq0$. Moreover, we recall that a matrix $A$
 is a Z-matrix if all its off-diagonal entries are nonpositive. It is easy to show that a Z-matrix can be written in the
 form $A=sI-N$ where $s\in\RR$ and $N\geq0$. If $s\geq\rho(N)$, where $\rho(\cdot)$ denotes the
 spectral radius, then $A$ is called M-matrix.

 Furthermore, we will always suppose that the following assumption holds.
\begin{Ass}\label{Ass1}
We assume that
 \begin{itemize}
  \item $B$ is nonnegative, $B\geq0$, and $C$ is nonpositive, $C\leq0$.
  \item $I\otimes D+(A^T\otimes I)\Pi$ is a nonsingular M-matrix where $\otimes$ denotes the Kronecker product while
  $\Pi\in\mathbb{R}^{n^2\times n^2}$ is a permutation matrix given by $\Pi:=\sum_{i=1}^n\sum_{j=1}^n E_{i,j}\otimes E_{j,i}$.
 \end{itemize}
\end{Ass}
The matrix $E_{i,j}\in\mathbb{R}^{n\times n}$ in Assumption~\ref{Ass1} is the matrix whose $(i,j)$-th entry is 1 while
all the others are zero.

Notice that $I\otimes D+(A^T\otimes I)\Pi$ being a nonsingular M-matrix implies that the T-Sylvester operator
$\mathcal{S}_T:\mathbb{R}^{n\times n}\rightarrow\mathbb{R}^{n\times n},$ $\mathcal{S}_T(X):=DX+X^TA$,
has a nonnegative inverse,
i.e., $\mathcal{S}^{-1}_T(X)\geq0$ for $X\geq0$. For the standard Sylvester operator
$\mathcal{S}:\mathbb{R}^{n\times n}\rightarrow\mathbb{R}^{n\times n},$ $\mathcal{S}(X):=DX+XA$, this is guaranteed by
assuming $A$, $D$ to be nonsingular M-matrices. See, e.g. \cite[Theorem A.20]{Bini2012}.

\section{Existence and uniqueness of a minimal solution}\label{Existence and uniqueness of a minimal solution}
In this section we provide sufficient conditions for the existence and uniqueness of a minimal solution $X_{\min}$ to \eqref{eq.TRiccati} and our result rely on the following fixed-point iteration
\begin{equation}\label{fixed-point}
 \begin{array}{rcl}
X_0&=&0,\\
 DX_{k+1}+X_{k+1}^TA &=& X_{k}^TBX_k-C,\quad k\geq 0.\\
 \end{array}
\end{equation}
\begin{theorem}\label{Th.1}
  The iterates computed by the fixed-point iteration \eqref{fixed-point} are such that
 $$X_{k+1}\geq X_k,\quad k\geq0,$$
and, if there exists a nonnegative matrix $Y$ such that $\mathcal{R}_T(Y)\geq0$, then $X_k\leq Y$ for any $k\geq 0$.
 Moreover, $\{X_k\}_{k\geq0}$ converges to the minimal nonnegative solution $X_{\min}$ to \eqref{eq.TRiccati}.
\end{theorem}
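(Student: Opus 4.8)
The proof proceeds by induction to establish the three claims in sequence, then passes to the limit.

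\textbf{Step 1: Monotonicity of the iterates.} The plan is to show $X_{k+1} \geq X_k$ by induction on $k$. For the base case, $X_1$ satisfies $DX_1 + X_1^T A = -C \geq 0$ since $C \leq 0$, so applying $\mathcal{S}_T^{-1}$ (which is nonnegative by Assumption~\ref{Ass1}) gives $X_1 \geq 0 = X_0$. For the inductive step, subtract the defining equations for consecutive indices:
\begin{equation*}
D(X_{k+1} - X_k) + (X_{k+1} - X_k)^T A = X_k^T B X_k - X_{k-1}^T B X_{k-1}.
\end{equation*}
The key is to rewrite the right-hand side so that nonnegativity is visible. One writes $X_k^T B X_k - X_{k-1}^T B X_{k-1} = X_k^T B(X_k - X_{k-1}) + (X_k - X_{k-1})^T B X_{k-1}$, and since $B \geq 0$, $X_k \geq X_{k-1} \geq 0$ (inductive hypotheses), and $X_k, X_{k-1} \geq 0$, each summand is a product of nonnegative matrices, hence nonnegative. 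Applying $\mathcal{S}_T^{-1}$ yields $X_{k+1} - X_k \geq 0$.

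\textbf{Step 2: The upper bound.} Assuming a nonnegative $Y$ with $\mathcal{R}_T(Y) \geq 0$ exists, I would show $X_k \leq Y$ by induction. The base case $X_0 = 0 \leq Y$ is immediate. For the step, $\mathcal{R}_T(Y) \geq 0$ reads $DY + Y^T A \geq Y^T B Y - C$, while the iteration gives $DX_{k+1} + X_{k+1}^T A = X_k^T B X_k - C$. Subtracting,
\begin{equation*}
D(Y - X_{k+1}) + (Y - X_{k+1})^T A \geq Y^T B Y - X_k^T B X_k.
\end{equation*}
Again decompose $Y^T B Y - X_k^T B X_k = Y^T B(Y - X_k) + (Y - X_k)^T B X_k$, which is nonnegative because $B \geq 0$, $Y \geq X_k \geq 0$, and $Y, X_k \geq 0$. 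So $D(Y - X_{k+1}) + (Y - X_{k+1})^T A \geq 0$, and nonnegativity of $\mathcal{S}_T^{-1}$ gives $Y - X_{k+1} \geq 0$.

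\textbf{Step 3: Convergence and minimality.} The sequence $\{X_k\}$ is monotonically nondecreasing. To get an upper bound unconditionally, I need to exhibit \emph{some} nonnegative $Y$ with $\mathcal{R}_T(Y) \geq 0$; the natural candidate is any nonnegative solution of the equation itself, but to argue one exists I would instead construct $Y$ directly — this is the main obstacle. The cleanest route: note that $\mathcal{R}_T(Y) \geq 0$ for a nonnegative $Y$ is equivalent (via $\mathcal{S}_T^{-1} \geq 0$) to $Y \geq \mathcal{S}_T^{-1}(Y^T B Y - C)$, and one can seek $Y$ large enough in an appropriately scaled cone; alternatively, the existence of such a $Y$ may be imported as a standing hypothesis implicit in the theorem (the statement is conditional: ``if there exists a nonnegative $Y$...''), so for the final convergence claim I assume such a $Y$ is available. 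Then $0 \leq X_k \leq Y$ entrywise and monotone, so $X_k \to X_*$ entrywise for some $X_*$ with $0 \leq X_* \leq Y$. Passing to the limit in the continuous iteration map $DX_{k+1} + X_{k+1}^T A = X_k^T B X_k - C$ gives $\mathcal{R}_T(X_*) = 0$, so $X_*$ solves \eqref{eq.TRiccati}. For minimality, if $\hat X \geq 0$ is any nonnegative solution, then $\mathcal{R}_T(\hat X) = 0 \geq 0$, so by Step~2 (with $Y = \hat X$) we get $X_k \leq \hat X$ for all $k$, hence $X_* = X_{\min} \leq \hat X$. I expect the genuine difficulty to lie in Step~3: producing the dominating $Y$ (and thus making the convergence unconditional) requires exploiting the M-matrix structure of $I \otimes D + (A^T \otimes I)\Pi$ together with $B \geq 0$, $C \leq 0$ more quantitatively than the purely order-theoretic arguments of Steps~1--2.
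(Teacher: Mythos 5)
Your proposal is correct and follows essentially the same route as the paper: induction for the monotonicity $X_{k+1}\geq X_k$ via the nonnegativity of $X_k^TBX_k-X_{k-1}^TBX_{k-1}$ (your explicit splitting $X_k^TB(X_k-X_{k-1})+(X_k-X_{k-1})^TBX_{k-1}$ is just a more detailed justification of what the paper asserts directly), induction for $X_k\leq Y$ using the nonnegative inverse of $\mathcal{S}_T$, and passage to the limit for convergence and minimality. The ``obstacle'' you flag in Step~3 is not a gap relative to the paper: the paper likewise treats the existence of a dominating nonnegative $Y$ with $\mathcal{R}_T(Y)\geq 0$ as a hypothesis rather than constructing one, so the convergence claim is conditional in both versions.
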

\begin{proof}
 We first show that $X_{k+1}\geq X_k$ for any $k\geq0$ by induction on $k$.
 For $k=0$, we have $X_1=\mathcal{S}_T^{-1}(-C)\geq0=X_0$ as $C\leq 0$.
 We now assume that $X_{\bar k}\geq X_{\bar k-1}$ for a certain $\bar k>0$ and we show that
 $X_{\bar k+1}\geq X_{\bar k}$. We have
 $$\begin{array}{rll}
 X_{\bar k+1}&=&\mathcal{S}_T^{-1}(X_{\bar k}^TBX_{\bar k}-C)=
 \mathcal{S}_T^{-1}(X_{\bar k}^TBX_{\bar k})+\mathcal{S}_T^{-1}(-C)=
 \mathcal{S}_T^{-1}(X_{\bar k}^TBX_{\bar k})+X_1+X_{\bar k}-X_{\bar k}\\
 &&\\
 &=&\mathcal{S}_T^{-1}(X_{\bar k}^TBX_{\bar k})+X_1+X_{\bar k}-\mathcal{S}_T^{-1}(X_{\bar k-1}^TBX_{\bar k-1}-C)
  =\mathcal{S}_T^{-1}(X_{\bar k}^TBX_{\bar k}-X_{\bar k-1}^TBX_{\bar k-1})+X_{\bar k}.
 \end{array}
$$
 Clearly, $X_{\bar k}^T\geq X_{\bar k-1}^T,$ as $X_{\bar k}\geq X_{\bar k-1}$ by inductive hypothesis.
 Therefore, recalling that $B\geq0$,  we have $$X_{\bar k}^TBX_{\bar k}-X_{\bar k-1}^TBX_{\bar k-1}\geq0$$ and
 $X_{\bar k+1}$ is thus nonnegative.

 We now suppose that there exists a nonnegative $Y$ such that $\mathcal{R}_T(Y)\geq0$ and we show that
 $X_k\leq Y$ for any $k\geq 0$ by induction on $k$ once again. The result is straightforward for $k=0$ as $X_0=0$.
We now assume that $X_{\bar k}\leq Y$ for a certain $\bar k>0$ and we show that
 $X_{\bar k+1}\leq Y$. Since $X_{\bar k}\leq Y$ and $B\geq0$, $X_{\bar k}^TBX_{\bar k}\leq Y^TBY$ so that
  $-X_{\bar k}^TBX_{\bar k}\geq -Y^TBY$. We can thus write
  $$  0\leq DY+Y^TA-Y^TBY+C\leq DY+Y^TA-X_{\bar k}^TBX_{\bar k}+C,$$
  and since $-X_{\bar k}^TBX_{\bar k}+C=-DX_{\bar k+1}-X_{\bar k+1}^TA$ by definition, we get
  $$  0\leq  DY+Y^TA-DX_{\bar k+1}-X_{\bar k+1}^TA.$$
  This means that $\mathcal{S}_T(Y-X_{\bar k+1})\geq0$ which implies $Y\geq X_{\bar k+1}$.

  In conclusion, $\{X_k\}_{k\geq0}$ is a nondecreasing, nonnegative sequence bounded from above and it thus has a finite
  limit $\lim_{k\rightarrow+\infty}X_k=X_{\min}\leq 0$. Taking the limit in both sides of \eqref{fixed-point} shows that
  $X_{\min}$ is a solution of the equation $\mathcal{R}_T(X)=0$. Moreover, $X_{\min}$ is the minimal nonnegative solution
  as we showed that $X_{\min}\leq Y$ for any nonnegative $Y$ such that $\mathcal{R}_T(Y)\geq0$.
 \end{proof}
A similar result has been shown in \cite[Theorem 2.3]{Guo2001} for the (standard) nonsymmetric Riccati equation.

\section{The (inexact) Newton-Kleinman method}\label{The (inexact) Newton-Kleinman method}
The fixed-point iteration \eqref{fixed-point} may be not very well-suited for the actual computation of the minimal solution $X_{\min}$ and a Newton-Kleinman-like method can be more effective for this task.

The $k$-th iteration of the Newton method is defined as
$$\mathcal{R}'_T[X](X_{k+1}-X_k)=-\mathcal{R}_T(X_k),$$
where $\mathcal{R}'_T[X]$ denotes the Fr\'{e}chet derivative of $\mathcal{R}_T$ at $X$. For the
nonsymmetric T-Riccati operator, we have
$$\mathcal{R}'_T[X](Y)=DY+Y^TA-Y^TBX-X^TBY=(D-X^TB)Y+Y^T(A-BX),$$
and therefore the $(k+1)$-st iterate of the Newton method is the solution of the T-Sylvester equation
\begin{equation}\label{Newton_step}
(D-X_k^TB)X_{k+1}+X_{k+1}^T(A-BX_{k})=-X_{k}^TBX_{k}-C.
\end{equation}
Depending on the problem size $n$, different
state-of-the-art methods can be employed for the solution of the equations \eqref{Newton_step}.
See, e.g., \cite{DeTeran2011,Dopico2016}.
However, we first need to guarantee that the sequence $\{X_k\}_{k\geq 0}$ generated by \eqref{Newton_step} is well-defined and it converges to $X_{\min}$; this is the topic of the next section.


\subsection{A convergence result}\label{A convergence result}

In this section we prove the convergence properties of the Newton-Kleinman method \eqref{Newton_step}. To this end, we first recall a couple of classic results about M-matrices.
See, e.g., \cite[Chapter 6]{Berman1994}.
\begin{lemma}\label{lemma_Mmatrix}
 Let $A$ be a Z-matrix. Then $A$ is a nonsingular M-matrix if and only if there exists a nonnegative vector $v$ such that $Av>0$. Moreover,
 if $A$ is a nonsingular M-matrix and $B\geq A$ is a Z-matrix, then $B$ is also a nonsingular M-matrix.
\end{lemma}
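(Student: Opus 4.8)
The plan is to establish the classical Z-matrix characterization of nonsingular M-matrices; since this is standard (see \cite[Chapter 6]{Berman1994}) I would keep the argument terse. For the implication ``$A$ nonsingular M-matrix $\Rightarrow$ there is $v\geq0$ with $Av>0$'', write $A=sI-N$ with $N\geq0$ and $s>\rho(N)$, so that $\rho(N/s)<1$ and the Neumann series gives $A^{-1}=s^{-1}\sum_{j\geq0}(N/s)^j\geq0$; then $v:=A^{-1}\mathbf{1}$, with $\mathbf{1}$ the all-ones vector, satisfies $v\geq0$ and $Av=\mathbf{1}>0$, as required.

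For the converse, suppose $A$ is a Z-matrix and $v\geq0$ with $Av>0$. First I would observe that in fact $v>0$: if some $v_i=0$, then $(Av)_i=\sum_{j\neq i}a_{ij}v_j\leq0$, because the off-diagonal entries $a_{ij}$ are nonpositive and $v\geq0$, contradicting $(Av)_i>0$. Now write $A=sI-N$ with $s$ large enough (positive and above every diagonal entry of $A$) so that $N\geq0$; then $Av>0$ reads $Nv<sv$ componentwise, hence $Nv\leq cv$ with $c:=\max_i(Nv)_i/v_i<s$. Deducing $\rho(N)\leq c<s$ then identifies $A$ as a nonsingular M-matrix by definition.

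I expect the only genuine step here to be this last spectral-radius bound, which I would obtain from Perron--Frobenius: since $N^T\geq0$ there is a nonzero $u\geq0$ with $u^TN=\rho(N)u^T$; left-multiplying $Nv\leq cv$ by $u^T\geq0$ gives $\rho(N)\,u^Tv\leq c\,u^Tv$, and $u^Tv>0$ (as $u\neq0$, $u\geq0$, $v>0$) yields $\rho(N)\leq c$.

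Finally, for the monotonicity claim, if $A$ is a nonsingular M-matrix pick $v\geq0$ with $Av>0$ from the first part; then $B\geq A$ and $v\geq0$ give $Bv\geq Av>0$, and since $B$ is a Z-matrix the converse direction applies verbatim to show that $B$ is a nonsingular M-matrix. The main obstacle throughout is the Perron--Frobenius argument in the converse implication; everything else is just careful bookkeeping with the sign conventions of Z-matrices.
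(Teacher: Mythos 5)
Your proof is correct. Note that the paper does not actually prove this lemma at all: it is stated as a classical fact and dispatched with a citation to \cite[Chapter 6]{Berman1994}, so there is no in-paper argument to compare against. What you have written is a sound, self-contained rendition of the standard argument: the Neumann-series expansion of $(sI-N)^{-1}$ for the forward direction, the observation that $v\geq 0$ with $Av>0$ forces $v>0$ for a Z-matrix, the bound $\rho(N)\leq\max_i (Nv)_i/v_i<s$ via a nonnegative left Perron eigenvector of $N$, and the monotonicity claim reduced to $Bv\geq Av>0$. All steps check out (in particular $u^Tv>0$ is justified since $u\neq 0$, $u\geq0$ and $v>0$, and the choice of $s$ large enough that $N=sI-A\geq0$ is legitimate because the M-matrix property does not depend on the particular splitting). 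This is essentially the textbook proof, so there is nothing to flag.
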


To prove the convergence of the Newton method to the minimal nonnegative solution $X_{\min}$ to \eqref{eq.TRiccati}, we also need the following lemma.
\begin{lemma}\label{lemma_Xmin}
 Assume there exists a matrix $\widebar Y$ such that $\mathcal{R}_T(\widebar Y)>0$, then $I\otimes (D-X_{\min}^TB)+((A-BX_{\min})^T\otimes I)\Pi$ is a nonsingular M-matrix.
\end{lemma}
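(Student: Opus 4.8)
The plan is to recognize the matrix $\mathcal{M}:=I\otimes(D-X_{\min}^TB)+((A-BX_{\min})^T\otimes I)\Pi$ appearing in the statement as the Kronecker (vectorized) representation of the Fr\'echet derivative $\mathcal{R}'_T[X_{\min}]$: using $\vect(Z^T)=\Pi\,\vect(Z)$ one checks that $\vect\bigl(\mathcal{R}'_T[X_{\min}](Z)\bigr)=\mathcal{M}\,\vect(Z)$ for every $Z\in\RR^{n\times n}$. By the ``if'' direction of Lemma~\ref{lemma_Mmatrix} it then suffices to (a) verify that $\mathcal{M}$ is a Z-matrix and (b) produce a nonnegative $Z$ with $\mathcal{R}'_T[X_{\min}](Z)>0$ entrywise, since $v:=\vect(Z)\ge0$ will then satisfy $\mathcal{M}v>0$.

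For (a) I would write $\mathcal{M}=\mathcal{M}_0-\mathcal{N}$ with $\mathcal{M}_0:=I\otimes D+(A^T\otimes I)\Pi$ and $\mathcal{N}:=I\otimes(X_{\min}^TB)+((BX_{\min})^T\otimes I)\Pi$. By Assumption~\ref{Ass1}, $\mathcal{M}_0$ is a nonsingular M-matrix, hence a Z-matrix; and since $X_{\min}\ge0$ and $B\ge0$ we have $X_{\min}^TB\ge0$, $BX_{\min}\ge0$ and $\Pi\ge0$, so $\mathcal{N}\ge0$. Hence the off-diagonal entries of $\mathcal{M}$ are bounded above by the nonpositive off-diagonal entries of $\mathcal{M}_0$, and $\mathcal{M}$ is a Z-matrix. (Note that $\mathcal{M}\le\mathcal{M}_0$ is oriented the wrong way to invoke the monotonicity clause of Lemma~\ref{lemma_Mmatrix} directly, so a genuine positive witness really is needed in step (b).)

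For (b) I would use the quadratic identity, valid for all $X,Y$ by a direct expansion,
\[
\mathcal{R}_T(Y)=\mathcal{R}_T(X)+\mathcal{R}'_T[X](Y-X)-(Y-X)^TB(Y-X),
\]
specialized to $X=X_{\min}$ (so that $\mathcal{R}_T(X_{\min})=0$) and $Y=\widebar Y$. Writing $Z:=\widebar Y-X_{\min}$ this rearranges to $\mathcal{R}'_T[X_{\min}](Z)=\mathcal{R}_T(\widebar Y)+Z^TBZ$. By hypothesis $\mathcal{R}_T(\widebar Y)>0$, and if $Z\ge0$ then $Z^TBZ\ge0$ because $B\ge0$; hence $\mathcal{R}'_T[X_{\min}](Z)>0$, which is precisely the witness needed, and Lemma~\ref{lemma_Mmatrix} then concludes that $\mathcal{M}$ is a nonsingular M-matrix.

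The step I expect to be the main obstacle is establishing $Z=\widebar Y-X_{\min}\ge0$, i.e.\ that $\widebar Y$ dominates the minimal solution. The clean way to obtain it is to note that $\mathcal{R}_T(\widebar Y)>0\ge0$ together with $\widebar Y\ge0$ puts us in the setting of Theorem~\ref{Th.1}, which yields $X_k\le\widebar Y$ for all $k$ and hence $X_{\min}\le\widebar Y$; accordingly I would read the hypothesis with $\widebar Y\ge0$ (the natural assumption under which this theory is developed) and make that explicit in the proof. Everything else --- verifying the quadratic identity and the Kronecker bookkeeping --- is routine.
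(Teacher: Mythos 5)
Your proof is correct and follows essentially the same route as the paper: the same Z-matrix decomposition $rI-(\text{nonnegative})$ for $I\otimes(D-X_{\min}^TB)+((A-BX_{\min})^T\otimes I)\Pi$, and the same positive witness $\vect(\widebar Y-X_{\min})$, your quadratic identity being exactly what the paper obtains by expanding and adding/subtracting $\widebar Y^TB\widebar Y$. Your remark that $\widebar Y\geq0$ must be assumed (so that Theorem~\ref{Th.1} yields $X_{\min}\leq\widebar Y$) is a fair observation --- the paper's statement of the lemma omits this hypothesis, but its proof relies on it at exactly the same step.
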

\begin{proof}
Since Assumption~\ref{Ass1} holds,
$I\otimes D+(A^T\otimes I)\Pi=rI_{n^2}-N$, $N\geq0$, $r>\rho(N)$, and we can write
$$
\begin{array}{rll}
I\otimes (D-X_{\min}^TB)+((A-BX_{\min})^T\otimes I)\Pi&=&
I\otimes D+(A^T\otimes I)\Pi-(I\otimes X_{\min}^TB+
(BX_{\min})^T\otimes I)\Pi)\\
&&\\
&=&rI-\underbrace{(N+(I\otimes X_{\min}^TB+
(BX_{\min})^T\otimes I)\Pi)}_{\geq0},\\
\end{array}
$$
as $B,$ $X_{\min}\geq0$. Therefore, $I\otimes (D-X_{\min}^TB)+((A-BX_{\min})^T\otimes I)\Pi$ is a Z-matrix.

 Moreover,
 $$\begin{array}{rll}
    (D-X_{\min}^TB)(\widebar Y-X_{\min})+(\widebar Y-X_{\min})^T(A-BX_{\min})&=&D\widebar Y-X_{\min}^TB\widebar Y-DX_{\min}+X_{\min}^TBX_{\min}\\
    &&+\widebar Y^TA-\widebar Y^TBX_{\min}-X_{\min}^TA+X_{\min}^TBX_{\min}.
   \end{array}
$$
 Since $\mathcal{R}_T(X_{\min})=0$, $-DX_{\min}-X_{\min}^TA+X_{\min}^TBX_{\min}=C$. Moreover, adding and subtracting $\widebar Y^TB\widebar Y$ we get
 $$ (D-X_{\min}^TB)(\widebar Y-X_{\min})+(\widebar Y-X_{\min})^T(A-BX_{\min})=\mathcal{R}_T(\widebar Y)+
 (\widebar Y-X_{\min})^TB(\widebar Y-X_{\min}).
$$
 To conclude, we notice that $\widebar Y-X_{\min}\geq 0$ as $X_{\min}$ is the minimal solution to \eqref{eq.TRiccati} and $\mathcal{R}_T(\widebar Y)>0$.
 Therefore,
 $$ (D-X_{\min}^TB)(\widebar Y-X_{\min})+(\widebar Y-X_{\min})^T(A-BX_{\min})\geq\mathcal{R}_T(\widebar Y)>0.
$$
This means that $\mbox{vec}(\widebar Y-X_{\min})$ is a nonnegative vector such that
$(I\otimes (D-X_{\min}^TB)+((A-BX_{\min})^T\otimes I)\Pi)\mbox{vec}(\widebar Y-X_{\min})>0$ and
$I\otimes (D-X_{\min}^TB)+((A-BX_{\min})^T\otimes I)\Pi$ is thus a nonsingular M-matrix thanks to Lemma~\ref{lemma_Mmatrix}.
\end{proof}

\begin{theorem}
 If the assumptions of Lemma~\ref{lemma_Xmin} hold, the sequence $\{X_k\}_{k\geq0}$ computed by
 the Newton method \eqref{Newton_step} with $X_0=0$ is well-defined and $X_k\leq X_{k+1}\leq X_{\min}$ for any $k\geq0$.
 Moreover $\{X_k\}_{k\geq0}$ converges to the minimal nonnegative solution $X_{\min}$ to \eqref{eq.TRiccati}.
\end{theorem}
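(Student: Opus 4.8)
First, the hypothesis of Lemma~\ref{lemma_Xmin} already guarantees, via Theorem~\ref{Th.1} applied with $Y=\widebar Y$ (note $\mathcal{R}_T(\widebar Y)>0\geq0$), that the minimal nonnegative solution $X_{\min}$ exists. The plan is then to establish the remaining assertions simultaneously by induction on $k$, showing at each stage that \eqref{Newton_step} is uniquely solvable, that $0\leq X_k\leq X_{k+1}\leq X_{\min}$, and that $\mathcal{R}_T(X_k)\leq0$; this last sign condition is the engine that drives the monotonicity. The overall structure mirrors the classical convergence proof of the Newton-Kleinman iteration for the standard nonsymmetric Riccati equation (cf.\ \cite{Guo2001}), the extra work specific to the T-structure being to keep the linear operator appearing at each step a nonsingular M-matrix, so that it has a nonnegative inverse.

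To set up the induction, write $\mathcal{S}_T^{(k)}(Y):=(D-X_k^TB)Y+Y^T(A-BX_k)$, so that \eqref{Newton_step} reads $\mathcal{S}_T^{(k)}(X_{k+1})=-X_k^TBX_k-C$, and let $M_k:=I\otimes(D-X_k^TB)+((A-BX_k)^T\otimes I)\Pi$ be its vectorized form. The base case $k=0$ holds trivially: $X_0=0\leq X_{\min}$ and $\mathcal{R}_T(0)=C\leq0$. For the inductive step, assume $0\leq X_k\leq X_{\min}$ and $\mathcal{R}_T(X_k)\leq0$. Since $B\geq0$ and $X_k\leq X_{\min}$ we have $D-X_k^TB\geq D-X_{\min}^TB$ and $A-BX_k\geq A-BX_{\min}$ entrywise, and hence, using also $\Pi\geq0$, $M_k\geq I\otimes(D-X_{\min}^TB)+((A-BX_{\min})^T\otimes I)\Pi$ entrywise. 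Exactly as in the proof of Lemma~\ref{lemma_Xmin} one checks that $M_k$ is a Z-matrix; since it dominates the nonsingular M-matrix produced by Lemma~\ref{lemma_Xmin}, the second part of Lemma~\ref{lemma_Mmatrix} shows that $M_k$ is itself a nonsingular M-matrix. In particular $M_k^{-1}\geq0$, so $X_{k+1}$ is well defined and, equivalently, $\mathcal{S}_T^{(k)}(Z)\geq0$ forces $Z\geq0$.

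It then remains to verify three identities, each obtained by a direct substitution using the definition of $X_{k+1}$ together with $\mathcal{R}_T(X_{\min})=0$ where needed: (i) $\mathcal{S}_T^{(k)}(X_{k+1}-X_k)=-\mathcal{R}_T(X_k)$, which with $\mathcal{R}_T(X_k)\leq0$ and $M_k^{-1}\geq0$ gives $X_{k+1}\geq X_k\geq0$; (ii) $\mathcal{R}_T(X_{k+1})=-(X_{k+1}-X_k)^TB(X_{k+1}-X_k)\leq0$, which reproduces the sign condition at level $k+1$; and (iii) $\mathcal{S}_T^{(k)}(X_{\min}-X_{k+1})=(X_{\min}-X_k)^TB(X_{\min}-X_k)\geq0$, which with $M_k^{-1}\geq0$ gives $X_{k+1}\leq X_{\min}$. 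This closes the induction, so $\{X_k\}_{k\geq0}$ is nondecreasing and bounded above by $X_{\min}$, and therefore converges to some $X_*$ with $0\leq X_*\leq X_{\min}$. Passing to the limit in \eqref{Newton_step} shows $\mathcal{R}_T(X_*)=0$, and minimality of $X_{\min}$ (Theorem~\ref{Th.1}) then forces $X_*\geq X_{\min}$, whence $X_*=X_{\min}$.

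The identities (i)--(iii) are routine bookkeeping. The one genuinely delicate point, and the part that really uses the T-Riccati structure, is keeping $M_k$ a nonsingular M-matrix for every $k$: this is why the induction must carry the bound $X_k\leq X_{\min}$ forward \emph{before} the next Newton step is taken, so that the entrywise comparison with the $X_{\min}$-operator of Lemma~\ref{lemma_Xmin} can be invoked — and it is here, and only here, that the hypothesis $\mathcal{R}_T(\widebar Y)>0$ is needed.
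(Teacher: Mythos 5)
Your proof is correct and follows essentially the same route as the paper's: the same three identities, the same Z-matrix decomposition, and the same entrywise comparison with the $X_{\min}$-operator via Lemmas~\ref{lemma_Mmatrix} and~\ref{lemma_Xmin}. The only (harmless) differences are organizational: you carry $\mathcal{R}_T(X_k)\leq 0$ explicitly as an induction hypothesis where the paper instead re-derives the sign of the right-hand side from the previous Newton equation, and you conclude $X_*=X_{\min}$ by invoking the minimality already established in Theorem~\ref{Th.1} rather than running a fresh induction against arbitrary supersolutions $H$.
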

\begin{proof}
 For the Newton method \eqref{Newton_step} with $X_0=0$, the matrix $X_1$ is given by
 $$DX_1+X_1^TA=-C.$$
 Since the T-Sylvester operator $\mathcal{S}_T$ has a nonnegative inverse by Assumption~\ref{Ass1} and $-C\geq0$, the first
 iterate $X_1$ is nonnegative.
 Therefore the statements
 $$X_k\leq X_{k+1},\quad X_k\leq X_{\min}, \quad I\otimes (D-X_k^TB)+((A-BX_k)^T\otimes I)\Pi \mbox{ is an M-matrix},$$
 hold for $k=0$. We now assume that they hold for a certain $\bar k>0$ and we show them for $\bar k+1$. We start proving that
 $X_{\bar k+1}\geq X_{\bar k}$. By definition, we have
\begin{equation}\label{eq:TSylv1}
 (D-X_{\bar k}^TB)X_{\bar k+1}+X_{\bar k+1}^T(A-BX_{\bar k})=-X_{\bar k}^TBX_{\bar k}-C,
\end{equation}
 so that
 $$(D-X_{\bar k}^TB)(X_{\bar k+1}-X_{\bar k})+(X_{\bar k+1}-X_{\bar k})^T(A-BX_{\bar k})=-DX_{\bar k}-X_{\bar k}^TA
 +X_{\bar k}^TBX_{\bar k}-C. $$
We can write
{\small
 $$\begin{array}{rll}
 -DX_{\bar k}-X_{\bar k}^TA+X_{\bar k}^TBX_{\bar k}-C&=&-(D-X_{\bar k-1}^TB)X_{\bar k}-X_{\bar k}^T(A-BX_{\bar k-1})-X_{\bar k-1}^TBX_{\bar k}
 -X_{\bar k}^TBX_{\bar k-1}+X_{\bar k}^TBX_{\bar k}-C\\
 &&\\
 &=& X_{\bar k-1}^TBX_{\bar k-1}+C-X_{\bar k-1}^TBX_{\bar k} -X_{\bar k}^TBX_{\bar k-1}+X_{\bar k}^TBX_{\bar k}-C\\
 &&\\
 &=&(X_{\bar k}-X_{\bar k-1})^TB(X_{\bar k}-X_{\bar k-1})\geq0,\\
   \end{array}
 $$
 }
 since $X_{\bar k}\geq X_{\bar k-1}$ and $B\geq0$. If $\mathcal{S}^{(k)}_T(X):=(D-X_{\bar k}^TB)X+X^T(A-BX_{\bar k})$,
 then $(\mathcal{S}^{(k)}_T)^{-1}$ is nonnegative as the matrix $I\otimes (D-X_{\bar k}^TB)+((A-BX_{\bar k})^T\otimes I)\Pi$
 is a nonsingular M-matrix by inductive hypothesis. Therefore, $X_{\bar k+1}-X_{\bar k}\geq0$.

 We now show that $X_{k+1}\leq X_{\min}$. Considering again \eqref{eq:TSylv1}, we see that
 $$(D-X_{\bar k}^TB)(X_{\bar k+1}-X_{\min})+(X_{\bar k+1}-X_{\min})^T(A-BX_{\bar k})=-DX_{\min}-X_{\min}^TA+X_{\bar k}^TBX_{\min}+X_{\min}^TBX_{\bar k}
 -X_{\bar k}^TBX_{\bar k}-C. $$
 We change sign and by adding and subtracting $X_{\min}^TBX_{\min}$ in the right-hand side, we get
 {\small
 $$\begin{array}{rll}
(D-X_{\bar k}^TB)(X_{\min}-X_{\bar k+1})+(X_{\min}-X_{\bar k+1})^T(A-BX_{\bar k})&=&DX_{\min}+X_{\min}^TA-X_{\bar k}^TBX_{\min}-X_{\min}^TBX_{\bar k}
 +X_{\bar k}^TBX_{\bar k}\\
 && +\, C+X_{\min}^TBX_{\min}-X_{\min}^TBX_{\min}\\
 &&\\
 &=&(X_{\min}-X_{\bar k})^TB(X_{\min}-X_{\bar k})\\
 &&\\
 &\geq&0,\\
   \end{array}
 $$
 }
where we have used the fact that $\mathcal{R}_T(X_{\min})=0$, $X_{\min}\geq X_{\bar k}$ and $B\geq0$. Since $\mathcal{S}^{(k)}_T$
has a nonnegative inverse we conclude that $X_{\min}-X_{\bar k+1}\geq0$.

The last statement we have to prove is that the matrix
$I\otimes (D-X_{\bar k+1}^TB)+((A-BX_{\bar k+1})^T\otimes I)\Pi$ is a nonsingular M-matrix. Since Assumption~\ref{Ass1} holds,
$I\otimes D+(A^T\otimes I)\Pi=rI_{n^2}-N$, $N\geq0$, $r>\rho(N)$, and we can write
$$
\begin{array}{rll}
I\otimes (D-X_{\bar k+1}^TB)+((A-BX_{\bar k+1})^T\otimes I)\Pi&=&
I\otimes D+(A^T\otimes I)\Pi-(I\otimes X_{\bar k+1}^TB+
(BX_{\bar k+1})^T\otimes I)\Pi)\\
&&\\
&=&rI-\underbrace{(N+(I\otimes X_{\bar k+1}^TB+
(BX_{\bar k+1})^T\otimes I)\Pi)}_{\geq0},\\
\end{array}
$$
as $B,$ $X_{\bar k+1}\geq0$. Therefore, $I\otimes (D-X_{\bar k+1}^TB)+((A-BX_{\bar k+1})^T\otimes I)\Pi$ is a Z-matrix.
Moreover, $$I\otimes (D-X_{\bar k+1}^TB)+((A-BX_{\bar k+1})^T\otimes I)\Pi\geq
I\otimes (D-X_{\min}^TB)+((A-BX_{\min})^T\otimes I)\Pi$$ since $X_{\bar k+1}\leq X_{\min}$ and
$I\otimes (D-X_{\min}^TB)+((A-BX_{\min})^T\otimes I)\Pi$ is a nonsingular M-matrix by Lemma~\ref{lemma_Xmin}. The matrix
$I\otimes (D-X_{\bar k+1}^TB)+((A-BX_{\bar k+1})^T\otimes I)$ is thus a nonsingular M-matrix by Lemma~\ref{lemma_Mmatrix}.

In conclusion, the Newton sequence $\{X_k\}_{k\geq0}$ is well-defined, nondecreasing and bounded from above.
Therefore, $\{X_k\}_{k\geq0}$ has a finite limit $X_*$ and, by taking the limit in both sides of \eqref{Newton_step},
it is easy to show that is also a solution of $\mathcal{R}_T(X)=0$. Moreover,
we can show $X_k\leq H$ for any $k\geq0$ and $H\geq0$ such that $\mathcal{R}_T(H)\geq0$ by induction.
Since the inequality is preserved for $k\rightarrow+\infty$, $X_*\leq H$ and
$X_*$ is thus the minimal solution of $\mathcal{R}_T(X)=0$, i.e., $X_*=X_{\min}$.
\end{proof}

If $n$ is moderate, say $n\leq \mathcal{O}(10^3)$, dense methods based on some decomposition of the coefficient matrices can be employed to solve the T-Sylvester equations in \eqref{Newton_step}. For instance, in \cite[Section 3]{DeTeran2011} an algorithm based on the generalized Schur decomposition of the pair $(D,A^T)$ is presented for efficiently solving a T-Sylvester equation of the form $DX+X^TA=C$.

If the problem dimension does not allow for dense matrix operations, equations \eqref{Newton_step} must be solved iteratively. The iterative solution of the T-Sylvester equations may introduce some inexactness in the Newton scheme leading to the so-called inexact Newton-Kleinman method and affecting the convergence features of the latter. By using tools similar to the ones presented in \cite{Benner2016},
in the next section we show how a specific line search guarantees the convergence of the inexact Newton method.


\subsection{The large-scale setting}\label{The large-scale setting}
In this section, we consider T-Riccati equations of large dimension. In this setting, unless the data $A$, $B$, $C$ and $D$ are equipped with some particular structure, equation \eqref{eq.TRiccati} is not numerically tractable. For instance,
the solution $X$ would be, in general, a dense $n\times n$ matrix that cannot be stored.
Therefore, as already mentioned, we assume that the matrices $A$ and $D$ are such that the matrix-vector products $Av$ and $Dw$ are easily computable in $\mathcal{O}(n)$ flops for any $v,w\in\mathbb{R}^n$. This is the case when, for instance, $A$ and $D$ are sparse. Moreover, we assume $B$ and $C$ to be low rank, namely $B=B_1B_2^T$, $B_1$, $B_2\in\mathbb{R}^{n\times p}$, and $C=C_1^TC_2$, $C_1$, $C_2\in\mathbb{R}^{q\times n}$, where $p+q\ll n$.
Equation \eqref{eq.TRiccati} can thus be written as
\begin{equation}\label{Eq.TRiccati_low-rank}
\mathcal{R}_T(X)=DX+X^TA-X^TB_1B_2^TX+C_1^TC_2=0,
\end{equation}
and low-rank approximations to $X$ are sought, namely we aim to compute and store only a couple of low-rank matrices $P_1,P_2\in\mathbb{R}^{n\times t}$, $t\ll n$, such that $P_1P_2^T\approx X$.

The results presented in the previous section are still valid also in the large-scale setting, for equation~\eqref{Eq.TRiccati_low-rank}.
The Newton method can be still applied and the $(k+1)$-st iterate can be computed by solving the equation
\begin{equation}\label{Newton_step_low-rank}
 (D-X_k^TB_1B_2^T)X_{k+1}+X_{k+1}^T(A-B_1B_2^TX_k)=-X_kB_1B_2^TX_k-C_1^TC_2.
\end{equation}
However, due to the large dimension of the problem, the exact solution to \eqref{Newton_step_low-rank} cannot be computed and only an approximation $\widetilde X_{k+1}\approx X_{k+1}$ can be constructed by, e.g., the projection methods presented in \cite{Dopico2016}.

The iterative solution of equations \eqref{Eq.TRiccati_low-rank} introduces some inexactness in the Newton scheme leading to an  inexact Newton method. The convergence result stated in Theorem~\ref{Th.1} no longer holds for the inexact variant of the Newton procedure and a line search has to be performed to ensure the convergence of the overall scheme.

Given a nonsymmetric $X_k\in\mathbb{R}^{n\times n}$, $\alpha>0$ and $\eta_k\in(0,1)$, we want to compute a matrix $S_k\in\mathbb{R}^{n\times n}$ such that
\begin{equation}\label{Eq.1_linesearch}
\|\mathcal{R}_T'[X_k](S_k)+\mathcal{R}_T(X_k)\|_F\leq \eta_k\|\mathcal{R}_T(X_k)\|_F,
\end{equation}
and then define the next iterate of the inexact Newton-Kleinman scheme as
\begin{equation}\label{iterate_linesearch}
X_{k+1}:=X_k+\lambda_k S_k,
\end{equation}
where the step size $\lambda_k>0$ is such that
\begin{equation}\label{decrease_condition}
 \|\mathcal{R}_T(X_k+\lambda_k S_k)\|_F\leq (1-\lambda_k\alpha)\|\mathcal{R}_T(X_k)\|_F,
\end{equation}
while $\lambda_k$ is not too small.

If we define the Newton step residual
\begin{equation}\label{def_L_k+1}
 \mathcal{R}_T'[X_k](S_k)+\mathcal{R}_T(X_k)=:L_{k+1},
\end{equation}
then equation \eqref{Eq.1_linesearch} can be written as $\|L_{k+1}\|_F\leq\eta_k\|\mathcal{R}_T(X_k)\|_F$. Moreover, writing explicitly the left-hand side in \eqref{def_L_k+1} we have
$$(D-X_k^TB_1B_2^T)(X_k+S_k)+(X_k+S_k)^T(A-B_1B_2^TX_k)+X_k^TB_1B_2^TX_k+C_1^TC_2=L_{k+1},
$$
so that the matrix $\widetilde X_{k+1}:=X_k+S_k$ is the solution of the T-Sylvester equation
\begin{equation}\label{inexactNewtonstep}
 (D-X_k^TB_1B_2^T)\widetilde X_{k+1}+\widetilde X_{k+1}^T(A-B_1B_2^TX_k)=-X_k^TB_1B_2^TX_k-C_1^TC_2+L_{k+1}.
\end{equation}
The matrix $L_{k+1}$ is never computed and the notation in \eqref{inexactNewtonstep} is used only to indicate that $\widetilde X_{k+1}$ is an inexact solution to the equation \eqref{Newton_step} such that the residual norm $\|L_{k+1}\|$ is sufficiently small. Once $\widetilde X_{k+1}$ is computed, we recover $S_k$ by $S_k=\widetilde X_{k+1}-X_k$
and the new iterate can be defined as in \eqref{iterate_linesearch}.

The T-Riccati residual
at $X_{k+1}$ can be written as
$$   \mathcal{R}_T(X_{k+1})=\mathcal{R}_T(X_k+\lambda_kS_k)=(1-\lambda_k)\mathcal{R}_k+\lambda_kL_{k+1}-\lambda_k^2S_k^TB_1B_2^TS_k,
$$
and, if $\eta_k\leq\widebar\eta <1$ and $\alpha\in(0,1-\widebar\eta)$, we have
$$\|\mathcal{R}_T(X_k+\lambda S_k)\|_F\leq(1-\lambda)\|\mathcal{R}_k\|_F+\lambda\|L_{k+1}\|_F+\lambda^2\|S_k^TB_1B_2^TS_k\|_F\leq (1-\alpha\lambda)\|\mathcal{R}_k\|_F,
$$
for all $\lambda\in(0,(1-\alpha-\widebar\eta )\frac{\|\mathcal{R}_T(X_k)\|_F}{\|S_k^TB_1B_2^TS_k\|_F}]$. In particular, the sufficient decrease condition \eqref{decrease_condition} is satisfied for all $\lambda$'s in the latter interval.

For the actual computation of the step size $\lambda_k$ we mimic the derivation given in \cite[Section 3]{Benner2016} for the algebraic Riccati equation, and we exploit the expression of the residual norm $\|\mathcal{R}_T(X_k+\lambda S_k)\|_F^2$ in terms of a quartic polynomial $p_k$ in $\lambda$. In particular,
\begin{equation}
p_k(\lambda)= \|\mathcal{R}_T(X_k+\lambda S_k)\|_F^2=(1-\lambda)^2\alpha_k+\lambda^2\beta_k+\lambda^4\delta_k+2\lambda(1-\lambda)\gamma_k-2\lambda(1-\lambda)\epsilon_k-2\lambda^3\xi_k,
\end{equation}
where
\begin{equation}\label{coefficients_linesearch}
\begin{array}{ll}
   \alpha_k=\|\mathcal{R}_T(X_k)\|_F^2, & \beta_k=\|L_{k+1}\|_F^2,\\
   \gamma_k=\langle\mathcal{R}_T(X_k),L_{k+1}\rangle_F, & \delta_k=\|S_k^TB_1B_2^TS_k\|_F^2,\\
   \epsilon_k=\langle\mathcal{R}_T(X_k),S_k^TB_1B_2^TS_k\rangle_F,& \xi_k=\langle L_{k+1},S_k^TB_1B_2^TS_k\rangle_F.
  \end{array}
\end{equation}

  The first derivative of $p_k(\lambda)$ is given by $$p_k'(\lambda)=-2(1-\lambda)\alpha_k+2\lambda\beta_k+4\lambda^3\delta_k+2(1-\lambda)\gamma_k-2\lambda(2+3\lambda)\epsilon_k-6\lambda^2\xi_k,$$ so that
  $$p_k'(0)=-2\alpha_k+2\gamma_k\leq (\eta_k-1)\|\mathcal{R}_T(X_k)\|_F^2<0$$ as $\eta_k\in(0,1)$ and $S_k$ is thus a descent direction.

The step size $\lambda_k$ can be computed by exploiting the expression of the T-Riccati residual norm in terms of $p_k(\lambda)$. If $\theta_k:=\min\{1,(1-\alpha-\widebar\eta )\sqrt{\alpha_k/\delta_k}\}$, we suggest to compute $\lambda_k$ as
\begin{equation}\label{stepsize_computation}
 \lambda_k:=\argmin_{(0,\theta_k]}p_k(\lambda).
\end{equation}
The choice of the interval $(0,\theta_k]$ is motivated by the fact that if $X_k$ and $\widetilde X_{k+1}$ are nonnegative matrices, then also $X_{k+1}=X_k+\lambda_k(\widetilde X_{k+1}-X_k)$ is nonnegative. Moreover, the sufficient decrease condition is satisfied for $\lambda_k\in(0,\theta_k]$.

Clearly \eqref{stepsize_computation} is not the only way to compute $\lambda_k$. For instance, in \cite[Section 3.2]{Benner2016} a step size computation based on the Armijo rule is explored in the case of the inexact Newton-Kleinman method applied to the algebraic Riccati equation and such approach can be adapted to our setting as well.

The inexact Newton-Kleinman method with line search is summarized in Algorithm~\ref{iNK_algorithm} and in the next theorem we show its convergence to the minimal solution $X_{\min}$.

\setcounter{AlgoLine}{0}
\begin{algorithm}
\caption{Inexact Newton-Kleinman method with line search $(X_0=0)$.\label{iNK_algorithm}}
\SetKwInOut{Input}{input}\SetKwInOut{Output}{output}
\Input{$A,D\in\mathbb{R}^{n\times n},$ $B_1,B_2\in\RR^{n\times p}$, $C_1,C_2\in\RR^{q\times n}$,
 $\varepsilon>0$, $\widebar\eta\in(0,1)$, $\alpha\in(0,1-\widebar\eta).$}
\Output{$X_k\in\mathbb{R}^{n\times n}$ approximate solution to \eqref{eq.TRiccati}.}
\BlankLine
\For{$k = 0,1,\dots,$ till convergence}{
 \If{$\|\mathcal{R}_T(X_k)\|_F<\varepsilon\cdot\|C_1^TC_2\|_F$}{
\nl \textbf{Stop} and return $X_k$}
\nl Select $\eta_k\in(0,\widebar \eta]$\\
\nl Compute $\widetilde X_{k+1}$ s.t.
$$(D-X_k^TB_1B_2^T)\widetilde X_{k+1}+\widetilde X_{k+1}^T(A-B_1B_2^TX_{k})^T=-X_{k}^TB_1B_2^TX_{k}-C_1^TC_2+L_{k+1}$$
where $\|L_{k+1}\|_F\leq\eta_k\|\mathcal{R}_T(X_k)\|_F$ \label{Alg.Lyap.solves}\\
\nl Set $S_k=\widetilde X_{k+1}-X_k$\\
\nl Compute $\lambda_k>0$ as in \eqref{stepsize_computation}\\
\nl Set $X_{k+1}=X_k+\lambda_kS_k$
}
\end{algorithm}
\newpage
\begin{theorem}\label{Theorem_linesearch}
 Let Assumption~\ref{Ass1} and Lemma~\ref{lemma_Xmin} hold and assume that for all $k\geq 0$, there exists a matrix $\widetilde X_{k+1}$ satisfying \eqref{inexactNewtonstep} where $\|L_{k+1}\|_F\leq\eta_k\|\mathcal{R}_T(X_k)\|_F$.

 \begin{itemize}
  \item[(i)] If the step sizes $\lambda_k$ are bounded away from zero, $\lambda_k\geq\lambda_{\min}>0$ for all $k$, then $\|\mathcal{R}_T(X_k)\|_F\rightarrow0$.
  \item[(ii)] If, in addition to (i), the matrices $L_{k+1}$ are nonnegative for all $k\geq 0$, then the sequence $\{X_k\}_{k\geq 0}$ generated by the inexact Newton-Kleinman method with $X_0=0$ is well-defined and $X_k\leq X_{k+1}\leq X_{\min}$. Moreover, $\{X_k\}_{k\geq 0}$ converges to the minimal solution $X_{\min}$ of \eqref{Eq.TRiccati_low-rank}.
 \end{itemize}

\end{theorem}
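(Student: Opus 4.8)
The plan is to prove (i) and (ii) by different mechanisms: (i) is essentially forced by the sufficient decrease the line search builds in, whereas (ii) is a monotone-convergence argument parallel to the convergence proof of the exact Newton-Kleinman scheme, with nonnegativity of the residuals $L_{k+1}$ taking over the role played there by exactness.

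For (i), the first thing I would verify is that the step size returned by \eqref{stepsize_computation} does satisfy the sufficient decrease condition \eqref{decrease_condition}. By the definition of $\theta_k$ we have $\theta_k\le(1-\alpha-\widebar\eta)\|\mathcal R_T(X_k)\|_F/\|S_k^TB_1B_2^TS_k\|_F$, and \eqref{decrease_condition} was already shown to hold for \emph{every} $\lambda$ in $\bigl(0,(1-\alpha-\widebar\eta)\|\mathcal R_T(X_k)\|_F/\|S_k^TB_1B_2^TS_k\|_F\bigr]$; hence it holds in particular at $\lambda_k=\argmin_{(0,\theta_k]}p_k$. Then, since $\lambda_k\ge\lambda_{\min}>0$ and $\lambda_k\alpha\le\alpha<1-\widebar\eta<1$, \eqref{decrease_condition} gives $\|\mathcal R_T(X_{k+1})\|_F\le(1-\lambda_{\min}\alpha)\|\mathcal R_T(X_k)\|_F$ with $1-\lambda_{\min}\alpha\in(0,1)$, and iterating from $X_0=0$ yields $\|\mathcal R_T(X_k)\|_F\le(1-\lambda_{\min}\alpha)^k\|C_1^TC_2\|_F\to 0$.

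For (ii), I would argue by induction on $k$ carrying the compound hypothesis: $0\le X_k\le X_{\min}$; $\mathcal R_T(X_k)\le 0$; and $I\otimes(D-X_k^TB_1B_2^T)+((A-B_1B_2^TX_k)^T\otimes I)\Pi$ is a nonsingular M-matrix, so that the T-Sylvester operator $\mathcal S_T^{(k)}(Z):=(D-X_k^TB_1B_2^T)Z+Z^T(A-B_1B_2^TX_k)$ is invertible with $(\mathcal S_T^{(k)})^{-1}\ge 0$. The base case $k=0$ is immediate ($X_0=0$, $\mathcal R_T(0)=C_1^TC_2\le 0$, and the M-matrix property is Assumption~\ref{Ass1}; the analogous property at $X_{\min}$, needed later, is Lemma~\ref{lemma_Xmin}). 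In the inductive step, subtracting $(D-X_k^TB_1B_2^T)X_k+X_k^T(A-B_1B_2^TX_k)$ from both sides of \eqref{inexactNewtonstep} gives
$$\mathcal S_T^{(k)}(\widetilde X_{k+1}-X_k)=L_{k+1}-\mathcal R_T(X_k)\ge 0,$$
so $S_k=\widetilde X_{k+1}-X_k\ge 0$ and $X_{k+1}=X_k+\lambda_kS_k\ge X_k$. A short computation using $\mathcal R_T(X_{\min})=0$ then yields the identity
$$\mathcal S_T^{(k)}(X_{\min}-X_{k+1})=(X_{\min}-X_k)^TB_1B_2^T(X_{\min}-X_k)-(1-\lambda_k)\mathcal R_T(X_k)-\lambda_kL_{k+1},$$
which, together with the residual recursion $\mathcal R_T(X_{k+1})=(1-\lambda_k)\mathcal R_T(X_k)+\lambda_kL_{k+1}-\lambda_k^2S_k^TB_1B_2^TS_k$, is what one uses to push $X_{k+1}\le X_{\min}$ and $\mathcal R_T(X_{k+1})\le 0$ to step $k+1$. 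The remaining M-matrix statement for $X_{k+1}$ then follows verbatim as in the exact case: the matrix in question is a Z-matrix that entrywise dominates $I\otimes(D-X_{\min}^TB_1B_2^T)+((A-B_1B_2^TX_{\min})^T\otimes I)\Pi$, a nonsingular M-matrix by Lemma~\ref{lemma_Xmin}, so Lemma~\ref{lemma_Mmatrix} applies. Finally $\{X_k\}$ is nondecreasing and bounded above by $X_{\min}$, hence converges to some $X_*\le X_{\min}$; by (i) $\mathcal R_T(X_*)=0$, and minimality of $X_{\min}$ among the nonnegative solutions of \eqref{Eq.TRiccati_low-rank} forces $X_*\ge X_{\min}$, so $X_*=X_{\min}$.

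I expect the real obstacle to be precisely the two assertions $X_{k+1}\le X_{\min}$ and $\mathcal R_T(X_{k+1})\le 0$ in the induction. When $L_{k+1}=0$ (the exact situation) both are immediate: applying $(\mathcal S_T^{(k)})^{-1}\ge 0$ to the displayed identity and inspecting the residual recursion leaves only manifestly nonnegative, respectively nonpositive, terms. With $L_{k+1}\ge 0$ the term $\lambda_kL_{k+1}$ has the ``wrong'' sign in both places --- it enters with a plus in the residual recursion and with a minus in the identity for $\mathcal S_T^{(k)}(X_{\min}-X_{k+1})$ --- so a naive sign count no longer closes the step. Overcoming this must exploit that $\lambda_k$ is the genuine minimizer of the quartic $p_k$ on $(0,\theta_k]$ rather than an arbitrary feasible point, so that the offending mass $\lambda_kL_{k+1}$ is absorbed by the curvature term $\lambda_k^2S_k^TB_1B_2^TS_k$ and by $(1-\lambda_k)\bigl(-\mathcal R_T(X_k)\bigr)\ge 0$; quantifying this --- and in particular ruling out an overshoot of $X_{k+1}$ past $X_{\min}$ --- is where the substantive work of the theorem lies.
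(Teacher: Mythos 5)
Part (i) of your proposal is correct and follows the same mechanism as the paper (the paper telescopes the sufficient decrease condition into a convergent series rather than iterating a geometric contraction, but both rest on exactly the same facts, including your preliminary check that $\lambda_k\in(0,\theta_k]$ guarantees \eqref{decrease_condition}).

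For part (ii), however, your proposal stops short of a proof at precisely the two assertions that carry the induction: $X_{k+1}\leq X_{\min}$ and $\mathcal{R}_T(X_{k+1})\leq 0$. You derive the correct identity
$\mathcal{S}_T^{(k)}(X_{\min}-X_{k+1})=(X_{\min}-X_k)^TB_1B_2^T(X_{\min}-X_k)-(1-\lambda_k)\mathcal{R}_T(X_k)-\lambda_kL_{k+1}$,
observe that the term $-\lambda_kL_{k+1}$ has the wrong sign for applying $(\mathcal{S}_T^{(k)})^{-1}\geq0$, and then only conjecture that minimality of $\lambda_k$ over $(0,\theta_k]$ lets the curvature term ``absorb'' it --- no argument is given, and it is not clear one exists along those lines, since $L_{k+1}$ is controlled only in Frobenius norm while the conclusion needed is entrywise. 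So as written the induction does not close and the theorem is not proved. For comparison: the paper does \emph{not} invoke minimality of $\lambda_k$ here at all. It works with $\widetilde X_{\bar k+1}$ rather than $X_{\bar k+1}$ (using $X_{\bar k+1}\leq\widetilde X_{\bar k+1}$), arrives at an identity for $\mathcal{S}_T^{(\bar k)}$ applied to the difference with $X_{\min}$ in which $L_{\bar k+1}$ appears with a \emph{favorable} sign, and closes $\mathcal{R}_T(X_{\bar k+1})\leq0$ by a separate expansion around $X_{\min}$ using the M-matrix property at $X_{\bar k+1}$. Your own (verifiable) computation of the sign of the $L$-term disagrees with the sign appearing in the paper's displayed identities, so you have in effect put your finger on the delicate point of the published argument rather than merely missed its trick; but a review of your proposal must still record that the two monotonicity/sign assertions are asserted, not established, and that is a genuine gap.
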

\begin{proof}
 The sufficient decrease condition \eqref{decrease_condition} implies that, for any $\ell\geq 0$,
 $$\begin{array}{rll}
 \|\mathcal{R}_T(X_0)\|_F&\geq&\displaystyle
 \|\mathcal{R}_T(X_0)\|_F-\|\mathcal{R}_T(X_{\ell+1})\|_F=
 \sum_{k=0}^\ell(\|\mathcal{R}_T(X_k)\|_F-\|\mathcal{R}_T(X_{k+1})\|_F)\\
 &\geq& \displaystyle  \sum_{k=0}^\ell\lambda_k\alpha\|\mathcal{R}_T(X_k)\|_F\geq0.
   \end{array}
 $$
 Taking the limit $\ell\rightarrow+\infty$ and using the fact that $\lambda_k\geq\lambda_{\min}>0$ for all $k$, we have $\|\mathcal{R}_T(X_k)\|_F\rightarrow 0$.

 The proof of $(ii)$ is given by induction on $k$. For $k=0$ we have
 $$D\widetilde X_1+\widetilde X_1^TA=-C_1^TC_2+L_1, \quad \|L_1\|_F\leq\eta_0\|C_1^TC_2\|_F.$$
 Since $I\otimes D+(A^T\otimes I)\Pi$ is a nonsingular M-matrix by assumption, $C_1^TC_2\leq 0$ and $L_1\geq 0$, the matrix $\widetilde X_1$ is nonnegative. Then $X_1:=\lambda_0\widetilde X_1\geq 0$ as $\lambda_0=\argmin_{(0,\theta_0]}p_0(\lambda)>0$. Moreover, $\mathcal{R}_T(X_0)=C_1^TC_2\leq 0$. Therefore, the statements
 $X_k\leq X_{k+1},$ $ X_k\leq X_{\min},$
 $\mathcal{R}_T(X_k)\leq 0,$ and
 $I\otimes (D-X_k^TB_1B_2^T)+((A-B_1B_2^TX_k)^T\otimes I)\Pi$ being a nonsingular M-matrix hold for $k=0$.

 We now assume they hold also for a certain $\bar k>0$ and we show them for $\bar k+1$. We have
 $$(D-X_{\bar k}^TB_1B_2^T)\widetilde X_{\bar k+1}+\widetilde X_{\bar k+1}^T(A-B_1B_2^TX_{\bar k})=-X_{\bar k}^TB_1B_2^TX_{\bar k}-C_1^TC_2+L_{\bar k +1}, \quad \|L_{\bar k +1}\|_F\leq\eta_{\bar k}\|\mathcal{R}_T(X_{\bar k})\|_F,$$
 so that
 $$(D-X_{\bar k}^TB_1B_2^T)(\widetilde X_{\bar k+1}-X_{\bar k})+(\widetilde X_{\bar k+1}-X_{\bar k})^T(A-B_1B_2^TX_{\bar k})=-DX_{\bar k}-X_{\bar k}^TA+X_{\bar k}^TB_1B_2^TX_{\bar k}-C_1^TC_2+L_{\bar k +1}.$$
 The right-hand side in the above expression can be written as
 {\small
 $$
 \begin{array}{rll}
    -DX_{\bar k}-X_{\bar k}^TA+X_{\bar k}^TB_1B_2^TX_{\bar k}-C_1^TC_2+L_{\bar k +1}&=& -(D-X_{\bar k -1}^TB_1B_2^T)X_{\bar k}-X_{\bar k}^T(A-B_1B_2^TX_{\bar k-1}) -X_{\bar k-1}^TB_1B_2^TX_{\bar k}\\
    &&-X_{\bar k}^TB_1B_2^TX_{\bar k-1}+X_{\bar k}^TB_1B_2^TX_{\bar k}-C_1^TC_2+L_{\bar k +1}.\\
   \end{array}
$$
}
 Recalling that $X_{\bar k}=(1-\lambda_{\bar k-1})X_{\bar k-1}+\lambda_{\bar k-1}\widetilde X_{\bar k}$ and that $\widetilde X_{\bar k}$ satisfies an equation of the form \eqref{Newton_step_low-rank}, a direct computation shows that
 \begin{align*}
 -DX_{\bar k}-X_{\bar k}^TA+X_{\bar k}^TB_1B_2^TX_{\bar k}-C_1^TC_2+L_{\bar k +1}=&\,
 (X_{\bar k}-X_{\bar k-1})^TB_1B_2^T(X_{\bar k}-X_{\bar k-1})-(1-\lambda_{\bar k-1})\mathcal{R}_T(X_{\bar k-1})\\
 & +L_{\bar k +1}+\lambda_{\bar k-1}L_{\bar k}\geq 0,\\
 \end{align*}
 as $X_{\bar k}\geq X_{\bar k-1}$ and $\mathcal{R}_T(X_{\bar k-1})\leq 0$ by inductive hypothesis, $B_1B_2^T\geq 0$, ,
 $L_{\bar k +1},L_{\bar k}\geq 0$, and $\lambda_{\bar k-1}\in(0,1]$.
 This implies that $\widetilde X_{k+1}\geq X_{\bar k}$ as the matrix $I\otimes (D-X_{\bar k}^TB_1B_2^T)+((A-B_1B_2^TX_{\bar k})^T\otimes I)\Pi$ is a nonsingular M-matrix by inductive hypothesis.

 Once $\lambda_{\bar k}$ is computed as in \eqref{stepsize_computation}, a direct computation shows that
$X_{\bar k+1}=(1-\lambda_{\bar k})X_{\bar k}+\lambda_{\bar k}\widetilde X_{\bar k+1}\geq X_{\bar k}$.

We now show that $X_{\bar k+1}\leq X_{\min}$. To this end we can show that $\widetilde X_{\bar k+1}\leq X_{\min}$ since $X_{\bar k+1}\leq \widetilde X_{\bar k+1}$. Indeed,
$$X_{\bar k+1}=(1-\lambda_{\bar k})X_{\bar k}+\lambda_{\bar k}\widetilde X_{\bar k+1}
\leq (1-\lambda_{\bar k})\widetilde X_{\bar k+1}+\lambda_{\bar k}\widetilde X_{\bar k+1}=\widetilde X_{\bar k+1}.$$
We have
{\small
\begin{align*}
(D-X_{\bar k}^TB_1B_2^T)(\widetilde X_{\bar k+1}-X_{\min})+(\widetilde X_{\bar k+1}-X_{\min})^T(A-B_1B_2^TX_{\bar k})=&-DX_{\min}-X_{\min}^TA+X_{\min}^TB_1B_2^TX_{\bar k}\\
&+X_{\bar k}^TB_1B_2^TX_{\min}
-X_{\bar k}^TB_1B_2^TX_{\bar k}
 -C_1^TC_2+L_{\bar k +1},
\end{align*}
}
and by changing the sign, adding and subtracting $X_{\min}^TB_1B_2^TX_{\min}$ in the right-hand side, we get
$$
(D-X_{\bar k}^TB_1B_2^T)(\widetilde X_{\bar k+1}-X_{\min})+(\widetilde X_{\bar k+1}-X_{\min})^T(A-B_1B_2^TX_{\bar k})=(X_{\min}-X_{\bar k})^TB_1B_2^T(X_{\min}-X_{\bar k})+L_{k+1},
$$
where we used the fact that $\mathcal{R}_T(X_{\min})=0$. Since $X_{\min}\geq X_{\bar k}$ by inductive hypothesis, $B_1B_2^T$, $L_{k+1}\geq0$, the right-hand side in the above equation is nonnegative so that $\widetilde X_{\bar k+1}\leq X_{\min}$ thanks to the fact that $I\otimes (D-X_{\bar k}^TB_1B_2^T)+((A-B_1B_2^TX_{\bar k})^T\otimes I)\Pi$ is a nonsingular M-matrix.

To show that $I\otimes (D-X_{\bar k+1}^TB_1B_2^T)+((A-B_1B_2^TX_{\bar k+1})^T\otimes I)\Pi$ is a nonsingular M-matrix, we can use the same argument as in the proof of Theorem~\ref{Th.1} as $ X_{\bar k+1}\leq X_{\min}$.

The last statement we have to show is $\mathcal{R}_T(X_{\bar k+1})\leq 0$. We can write
\begin{align*}
\mathcal{R}_T(X_{\bar k+1})=&\,(D-X_{\bar k+1}^TB_1B_2^T)(X_{\bar k+1}-X_{\min})+(X_{\bar k+1}-X_{\min})^T(A-B_1B_2^TX_{\bar k+1})
 -DX_{\min}-X_{\min}^TA\\
 &+X_{\min}^TB_1B_2^TX_{\bar k+1}+X_{\bar k+1}^TB_1B_2^TX_{\min}
-X_{\bar k+1}^TB_1B_2^TX_{\bar k+1}
 -C_1^TC_2.
\end{align*}
Since $X_{\bar k+1}-X_{\min}\leq 0$ and $I\otimes (D-X_{\bar k+1}^TB_1B_2^T)+((A-B_1B_2^TX_{\bar k+1})^T\otimes I)\Pi$ is a nonsingular M-matrix, $(D-X_{\bar k+1}^TB_1B_2^T)(X_{\bar k+1}-X_{\min})+(X_{\bar k+1}-X_{\min})^T(A-B_1B_2^TX_{\bar k+1})\leq 0$ and we have
\begin{align*}
\mathcal{R}_T(X_{\bar k+1})\leq&
 -DX_{\min}-X_{\min}^TA+X_{\min}^TB_1B_2^TX_{\bar k+1}+X_{\bar k+1}^TB_1B_2^TX_{\min}
-X_{\bar k+1}^TB_1B_2^TX_{\bar k+1}
 -C_1^TC_2\\
 =& - (X_{\min}-X_{\bar k+1})^TB_1B_2^T(X_{\min}-X_{\bar k+1})\leq 0,
\end{align*}
as $X_{\min}\geq X_{\bar k+1}$ and $B_1B_2^T\geq 0$.

In conclusion, the sequence $\{X_k\}_{k\geq 0}$ computed by the inexact Newton-Kleinman method with $X_0=0$ and equipped with the line search \eqref{stepsize_computation} is well-defined, nondecreasing and bounded from above.
Therefore, $\{X_k\}_{k\geq 0}$ has a finite limit $X_*$ that is also a solution of the T-Riccati equation since
$$0=\lim_{k\rightarrow+\infty}\|\mathcal{R}_T(X_k)\|_F=\|\mathcal{R}_T(\lim_{k\rightarrow+\infty}X_k)\|_F=\|\mathcal{R}_T(X_*)\|_F.
$$
Moreover, it is easy to show that $X_*\leq H$ for every nonnegative $H$ such that $\mathcal{R}_T(H)\geq 0$, hence $X_*=X_{\min}$.
\end{proof}

The assumption on the nonnegativity of $L_{k+1}$ may remind the reader of the hypothesis made in \cite{Feitzinger2009} for proving the converge of the inexact Newton-Kleinman method applied to the standard algebraic Riccati equation. Indeed, in \cite[Theorem 4.4]{Feitzinger2009}, the matrix $L_{k+1}$ is supposed to be positive semidefinite for all $k$. However, as outlined in \cite{Benner2016}, this condition is hard to meet in practice and in \cite[Theorem 10]{Benner2016} a different approach is used for showing the convergence of the inexact Newton scheme. In our setting we do not see any particular drawback in assuming
$L_{k+1}$ nonnegative for every $k$. Moreover, if the projection method presented in \cite{Dopico2016} is employed for the computation of $\widetilde X_{k+1}$, then the nonnegativity of $L_{k+1}$ may be further explored by exploiting the explicit form of this residual matrix given in \cite[Proposition 4.3]{Dopico2016}. However, this is beyond the scope of this paper.

The line search \eqref{iterate_linesearch} can be performed also when the exact solution to \eqref{inexactNewtonstep} can be computed
as in the case of moderate $n$.
If $\|L_{k+1}\|_F=0$ for all $k$ in \eqref{inexactNewtonstep}, it is easy to show that the quartic polynomial $p_k(\lambda)$ has a local minimizer in $(0,2]$ for all $k$ and we can replace the computation of the step size \eqref{stepsize_computation} by $\lambda_k:=\argmin_{(0,2]}p_k(\lambda)$; Theorem~\ref{Theorem_linesearch} still holds. This procedure may improve the convergence rate of the exact Newton-Kleinman method, especially for the first iterations, as shown in \cite{Benner1998} for the standard algebraic Riccati equation. See Example~\ref{Ex.1} in section~\ref{Numerical examples}.

\subsection{Implementation details}\label{Implementation details}
In this section, we present some details for an efficient implementation of Algorithm~\ref{iNK_algorithm}.

First of all, we recall that the computation of the Frobenius norm of low-rank matrices does not need to assemble any $n\times n$ dense matrix. For instance, only $q\times q$ matrices are actually involved in the computation of $\|C_1^TC_2\|_F$ as
$$\|C_1^TC_2\|_F^2=\text{trace}(C_2^TC_1C_1^TC_2)=\text{trace}((C_1C_1^T)(C_2C_2^T)).$$

The most expensive part of Algorithm~\ref{iNK_algorithm} is the solution of the large-scale T-Sylvester equations in line~3.
These equations can be solved, e.g., by employing the projection method presented in \cite{Dopico2016}. Given the T-Sylvester equation
$$DX+X^TA=-C_1^TC_2,$$
an approximate solution $X_m\in\mathbb{R}^{n\times n}$ of the form $X_m=V_mY_mW_m^T\approx X$ is constructed, where the orthonormal columns
of $V_m,W_m\in\mathbb{R}^{n\times \ell}$ span suitable subspaces $\mathcal{K}_{V_m}$ and $\mathcal{K}_{W_m}$ respectively, i.e.,
$\mathcal{K}_{V_m}=\text{Range}(V_m)$ and $\mathcal{K}_{W_m}=\text{Range}(W_m)$. We will always assume that $V_m$ and $W_m$ have full rank so that $\text{dim}(\mathcal{K}_{V_m})=\text{dim}(\mathcal{K}_{W_m})=\ell$. If this is not the case, deflation strategies as the ones presented in \cite{Gutknecht2006} can be implemented to overcome the possible linear dependence of the spanning vectors.
The $\ell\times\ell$ matrix $Y_m$ is computed by imposing a Petrov-Galerkin condition on the residual matrix $R_m=DX_m+X_m^TA+C_1^TC_2$ with respect to the space $\mathcal{K}_{W_m}\otimes \mathcal{K}_{W_m}$. This condition is equivalent to computing $Y_m$ by solving the reduced T-Sylvester equation
\begin{equation}\label{projected_eq}
(W_m^TDV_m)Y_m+Y_m^T(V_m^TAW_m)=-(W_m^TC_1^T)(C_2W_m).
\end{equation}
See \cite[Section 3]{Dopico2016}.
Equation~\eqref{projected_eq} can be solved by employing, e.g., Algorithm 3.1 presented in \cite{DeTeran2011}
as the small dimension of the coefficient matrices allows for the computation of the generalized Schur decomposition of the pair
$(W_m^TDV_m,(V_m^TAW_m)^T)$.

The effectiveness of the projection framework presented in \cite{Dopico2016}
is strictly related to the choice of the approximation spaces $\mathcal{K}_{V_m}$ and
$\mathcal{K}_{W_m}$. In \cite{Dopico2016} it is shown how the selection of these spaces may depend on the location of the spectrum $\Lambda(A^{-T}D)$ of $A^{-T}D$.
In particular, if $\Lambda(A^{-T}D)$ is strictly contained in the unit disk,
it is suggested to select
$$\mathcal{K}_{V_m}=\mathbf{K}_m^\square(A^{-T}D,A^{-T}[C_1^T,C_2^T]), \quad\text{and}\quad \mathcal{K}_{W_m}=A^T\cdot\mathcal{K}_{V_m}=\mathbf{K}_m^\square(DA^{-T},[C_1^T,C_2^T]),$$
where
$$\mathbf{K}_m^\square(A^{-T}D,A^{-T}[C_1^T,C_2^T])=\text{Range}(\left[A^{-T}[C_1^T,C_2^T],A^{-T}DA^{-T}[C_1^T,C_2^T],\ldots,
(A^{-T}D)^{m-1}A^{-T}[C_1^T,C_2^T]\right]),$$
is the block Krylov subspace generated by $A^{-T}D$ and $A^{-T}[C_1^T,C_2^T]$. If instead $\Lambda(A^{-T}D)$ is well outside the unit disk, then the roles of $A$ and $D$ are reversed and we can choose
$$\mathcal{K}_{V_m}=\mathbf{K}_m^\square(D^{-1}A^T,D^{-1}[C_1^T,C_2^T]), \quad\text{and}\quad \mathcal{K}_{W_m}=D\cdot\mathcal{K}_{V_m}=\mathbf{K}_m^\square(A^{T}D^{-1},[C_1^T,C_2^T]).$$

However, in general, the spectrum of $A^{-T}D$ is neither strictly contained in the unit disk nor well outside it and the employment of the extended Krylov subspaces
\begin{equation}\label{selected_spaces}
\mathcal{K}_{V_m}=\mathbf{EK}_m^\square(A^{-T}D,A^{-T}[C_1^T,C_2^T]),\quad
\text{and}\quad
\mathcal{K}_{W_m}=A^T\cdot\mathbf{EK}_m^\square(A^{-T}D,A^{-T}[C_1^T,C_2^T]),
\end{equation}
where $\mathbf{EK}_m^\square(A^{-T}D,A^{-T}[C_1^T,C_2^T]):=\mathbf{K}_m^\square(A^{-T}D,A^{-T}[C_1^T,C_2^T])+\mathbf{K}_{m}^\square(D^{-1}A^T,D^{-1}[C_1^T,C_2^T])$,
is recommended in this case. It has been shown how the projection method based on the extended Krylov subspaces \eqref{selected_spaces} performs quite well  in most of the results reported in \cite[Section 7]{Dopico2016} and if this procedure fails to converge, then also the projection schemes based on the block Krylov subspaces above fail as well.
Therefore, we also adopt the extended Krylov subspaces \eqref{selected_spaces} as approximation spaces in the solution of the sequence of T-Sylvester equations \eqref{Newton_step_low-rank} arising from the inexact Newton-Kleinman scheme.

The coefficient matrix defining the equations in \eqref{Newton_step_low-rank} are of the form $D-X_k^TB_1B_2^T$ and $A-B_1B_2^TX_k$ so that the spaces
$$\mathbf{EK}_m^\square((A-B_1B_2^TX_k)^{-T}(D-X_k^TB_1B_2^T),(A-B_1B_2^TX_k)^{-T}[C_1^T,C_2^T,X_k^TB_1,X_k^TB_2]),$$
and
$$
(A-B_1B_2^TX_k)^T\cdot\mathbf{EK}_m^\square((A-B_1B_2^TX_k)^{-T}(D-X_k^TB_1B_2^T),(A-B_1B_2^TX_k)^{-T}[C_1^T,C_2^T,X_k^TB_1,X_k^TB_2]),
$$
have to be computed at each Newton step $k\geq 0$. Such constructions require to solve linear systems of the form $(A+MN^T)z=y$ where $M,N\in\mathbb{R}^{n\times p}$ are low-rank and the Sherman-Morrison-Woodbury (SMW) formula
$$(A-MN^T)^{-1}=A^{-1}+A^{-1}M(I-N^TA^{-1}M)^{-1}N^TA^{-1},$$
can be employed to this end. See, e.g.,
\cite[Equation (2.1.4)]{Golub2013}.

Algorithm~\ref{T_EKSM} summarizes the projection method for the solution of the $(k+1)$-st T-Sylvester equation~\eqref{Newton_step_low-rank} where we suppose that the $k$-th iterate $X_k$ is given in low-rank format, namely $X_k=P_{1,k}P_{2,k}^T$, $P_k\in\mathbb{R}^{n\times t_k}$, $t_k\ll n$.

\setcounter{AlgoLine}{0}
\begin{algorithm}
\caption{Extended Krylov subspace method for T-Sylvester equations.\label{T_EKSM}}
\SetKwInOut{Input}{input}\SetKwInOut{Output}{output}
\Input{$A,D\in\mathbb{R}^{n\times n},$ $B_1,B_2\in\RR^{n\times p}$, $C_1,C_2\in\RR^{q\times n}$, $P_{1,k},P_{2,k}\in\mathbb{R}^{n\times t_k}$, $t_k\ll n$,
 $\varepsilon>0$, $m_{\max}>0$}
\Output{$\widetilde P_{1,k+1},\widetilde P_{2,k+1}\in\mathbb{R}^{n\times t_{k+1}}$, $t_{k+1}\ll n$, s.t. $\widetilde X_{k+1}=\widetilde P_{1,k+1}\widetilde P_{2,k+1}^T$ is an approximate solution to \eqref{Newton_step_low-rank}}
\BlankLine
\nl Set ${\pmb \alpha}=P_{1,k}^TB_1$ and ${\pmb \beta}=P_{1,k}^TB_2$\\
\nl Set $H=[C_1^T,C_2^T,P_{2,k}{\pmb \alpha},P_{2,k}{\pmb \beta}]$\\
   \nl  Perform economy-size QR,
   $[H,(A-B_1(B_2^TP_k)P_k^T)^{-1}H]=[\mathcal{V}_1^{(1)},\mathcal{V}_1^{(2)}]
  {\pmb \gamma},$
  where ${\pmb \gamma},\in\RR^{4(q+p)\times 4(q+p)}$\\                                                                                          \nl Set $V_1= [\mathcal{V}_1^{(1)},\mathcal{V}_1^{(2)}]$ \\ 
  \nl $W_1$ $\leftarrow$ orthonormalize the columns of $(A-B_1{\pmb \beta}^TP_{2,k}^T)^{T}V_1$\\
  \For{$m=1, 2,\dots,$ till $m_{\max}$}{%
\nl Compute next basis block $\mathcal{V}_{m+1}$ as in \cite{Dopico2016} and set $V_{m+1}=[V_{{m}},\mathcal{V}_{m+1}]$ \\
\nl $\mathcal{W}_{m+1}$ $\leftarrow$ orthonormalize the columns of $(A-B_1{\pmb \beta}^TP_{1,k}^T)^{T}\mathcal{V}_{m+1}$ w.r.t. $W_{m}$ \\
\nl Set $W_{m+1}=[W_m,\mathcal{W}_{m+1}]$\\
\nl Update $T_{m}=W_{m}^T(D-P_{2,k}{\pmb \alpha}B_2^T)V_{m}$, $K_{m}=V_{m}^T(A-B_1{\pmb \beta}^TP_{2,k}^T)W_{m}$  as in \cite{Dopico2016}  \\
\nl Update $G_1=W_m^T[C_1^T,P_{2,k}{\pmb \alpha}]$ and
  $G_2=W_m^T[C_2^T,P_{2,k}{\pmb \beta}^T]$\\
\nl Solve $T_mY_m+Y_m^TK_m=-G_1G_2^T$ \\
  \If{$\|L_{k+1}\|_F=\|(D-P_{2,k}{\pmb \alpha}B_2^T)(V_mY_mW_m^T)+ (V_mY_mW_m^T)^T(A-B_1{\pmb \beta}^TP_{2,k})+P_{2,k}{\pmb \alpha}{\pmb \beta}^TP_{2,k}+C_1^TC_2\|_F\leq\varepsilon$}{%
\nl \textbf{Break} and go to \textbf{13}}%
}%
\nl Factorize $Y_{m}$ and retain $\widehat Y_{1,m},\widehat Y_{2,m}\in\mathbb{R}^{4m(q+p)\times t_{k+1}}$, $t_{k+1}\leq 4m(q+p)$, $\widehat Y_{1,m}\widehat Y_{2,m}^T\approx Y_m$\\
\nl Set $\widetilde P_{1,k+1}=V_{m}\widehat Y_{1,m}$, $\widetilde P_{2,k+1}=W_{m}\widehat Y_{2,m}$\\
\end{algorithm}

To compute the residual norm
$\|L_{k+1}\|_F$ we do not need to construct the dense $n\times n$ residual
matrix $L_{k+1}=(D-P_{2,k}{\pmb \alpha}B_2^T)(V_mY_mW_m^T)+ (V_mY_mW_m^T)^T(A-B_1{\pmb \beta}^TP_{2,k})+P_{2,k}{\pmb \alpha}{\pmb \beta}^TP_{2,k}+C_1^TC_2$.
Indeed, it is easy to show that
$$\|L_{k+1}\|_F=\|{\pmb \tau}_{m+1,m}(e_m^T\otimes I_{4(p+q)})Y_m\|_F,$$
where ${\pmb \tau}_{m+1,m}:=W_{m+1}^T(D-P_{2,k}{\pmb \alpha}B_2^T) V_m$ and $e_m\in\RR^m$ is the $m$-th canonical basis vector of $\RR^m$.
See \cite[Proposition 5.1]{Dopico2016}. At the $k$-th iteration of the Newton-Kleinman scheme we can set $\varepsilon=\eta_k\|\mathcal{R}_T(X_k)\|_F$ as inner tolerance for Algorithm~\ref{T_EKSM}.

The computation of the coefficients in \eqref{coefficients_linesearch} needed for calculating the step-size $\lambda_k$ can be carried out at low cost. Indeed, even if it is not evident, all the quantities in
\eqref{coefficients_linesearch} consist of
inner products with low-rank matrices and they are thus cheap to evaluate as recalled at the beginning of this section. In
particular, if $X_k=P_{1,k}P_{2,k}^T$ is the $k$-th iterate of the Newton-Kleinman scheme and $\widetilde X_{k+1}=\widetilde P_{1,k+1}\widetilde P_{2,k+1}^T$ is the matrix computed by Algorithm~\ref{T_EKSM}, then we can write
\begin{align*}
 \mathcal{R}_T(X_k)&=DX_k+X_k^TA-X_k^TB_1B_2^TX_k+C_1C_2^T\\
   &= [DP_{1,k}, P_{2,k}, -P_{2,k}(P_{1,k}^TB_1), C_1^T][P_{2,k}, A^TP_{1,k}, P_{2,k}(P_{1,k}^TB_2), C_2^T]^T,\\
   \\
 L_{k+1}&=(D-X_k^TB_1B_2^T)\widetilde X_{k+1}+ \widetilde X_{k+1}^T(A-B_1B_2^TX_{k})+X_{k}^TB_1B_2^TX_{k}+C_1^TC_2  \\
 &=[D\widetilde P_{1,k+1}, -P_{2,k}{\pmb \alpha}\widetilde{\pmb \beta}^T,\widetilde P_{2,k+1}, \widetilde P_{2,k+1},
 \widetilde P_{2,k+1}\widetilde{\pmb \alpha}^T, C_1^T]\cdot \\
 & \quad \;[\widetilde P_{2,k+1},\widetilde P_{2,k+1}, A^T\widetilde P_{1,k+1}, -P_{2,k}({\pmb \beta}\widetilde{\pmb \alpha}^T), \widetilde P_{2,k+1}\widetilde {\pmb \beta}^T, C_2^T]^T,\\
\\
S_k&=\widetilde X_{k+1}-X_k=[\widetilde P_{1,k+1},- P_{1,k}][\widetilde P_{2,k+1}, P_{2,k}]^T,
\end{align*}
where ${\pmb \alpha}=P_{1,k}^TB_1$, ${\pmb \beta}=P_{1,k}^TB_2$,
$\widetilde{\pmb \alpha}=\widetilde P_{1,k+1}^TB_1$ and $\widetilde{\pmb \beta}=\widetilde P_{1,k+1}^TB_2$.

\section{Numerical examples} \label{Numerical examples}
In this section we report some results regarding the numerical solution of the nonsymmetric T-Riccati equation \eqref{eq.TRiccati}. Different instances of \eqref{eq.TRiccati} are considered and both the small-scale and the large-scale scenario are addressed.

When $n$ is moderate, the T-Sylvester equations arising from the Newton-Kleinman scheme \eqref{Newton_step} are solved by means of Algorithm~3.1 presented in \cite{DeTeran2011}. We show that also when equations \eqref{Newton_step} are solved exactly, a line search can improve the convergence rate of the Newton-Kleinman scheme by maintaining a monotone decrease in the residual norm. We always set the threshold for the relative residual norm to be equal to $10^{-12}$ for small $n$. Moreover, we report the number of iterations, i.e., the number of T-Sylvester equations solved, to achieve such accuracy, the final relative residual norm and the overall computational time in seconds.

For large problem dimensions, the inexact Newton-Kleinman method is employed in the solution of \eqref{eq.TRiccati} together with Algorithm~\ref{T_EKSM}
as inner solver. The tolerance for the outer relative residual norm achieved by the Newton scheme is set to $10^{-6}$ while the one for the inner solver changes as the iterations proceed accordingly to the discussion in section~\ref{Implementation details} where $\eta_k=1/(1+k^3)$.
Also in the large-scale setting we report the total number of T-Sylvester equations that need to be solved to get the desired accuracy, along with the average number of inner iterations, the final relative residual norm and the computational time for solving the problem. Moreover, since the memory requirements are one of the main issue in the numerical solution of large-scale matrix equations, we also document the storage demand of the solution process which corresponds to the dimension of the largest spaces \eqref{selected_spaces} constructed. The rank of the final numerical solution is reported to show that, at least in the tested examples, a low-rank approximate solution to \eqref{eq.TRiccati} can be sought.

All results were obtained with MATLAB R2017b \cite{MATLAB} on a Dell machine with two 2GHz
processors and 128 GB of RAM.

\begin{num_example}\label{Ex.1}
{\rm
 In the first example, we consider the same coefficient matrices as in \cite[Numerical test 7.1]{Dopico2016}. In particular, the matrices $D,A\in\mathbb{R}^{n\times n}$ come from the finite difference discretization on the unit square of the 2-dimensional differential operators
 $$\mathcal{L}_D(u)=-u_{xx}-u_{yy}+y(1-x)u_x+\gamma u, \quad \text{and}
 \quad \mathcal{L}_A(u)=-u_{xx}-u_{yy},
 $$
 respectively, and $\gamma=10^4$.
 Homogeneous Dirichlet boundary conditions are considered.

 We first tackle the case of moderate problem dimensions and choose $B,C\in\mathbb{R}^{n\times n}$ to be full random matrices.

 In Table~\ref{tab1} we report the results for different $n$.
 \begin{table}[!ht]
 \centering
 \caption{Example \ref{Ex.1}. Results for different values of (moderate) $n$. \label{tab1}}
\begin{tabular}{|r| r r r r|}
 \hline
 & $n$  & Its & Rel. Res & Time (secs) \\
 \hline
 w/o line search & \multirow{ 2}{*}{324} & 8 & 8.51e-15& 11.28 \\
 w/ line search & & 5 & 2.99e-14 & 7.54 \\
\hline
 w/o line search & \multirow{ 2}{*}{784} & 10 & 8.62e-14& 99.94 \\
 w/ line search & & 8 & 2.32e-14 & 73.73 \\
\hline

\end{tabular}
 \end{table}

 For this example, the exact line search discussed at the end of section~\ref{The large-scale setting} is effective in decreasing the number of iterations necessary to achieve the prescribed accuracy leading to a speed-up of the solution process. In particular, a small step-size $\lambda_1$ is computed at the first iteration avoiding an increment in the relative residual norm and allowing us to faster reach the region where quadratic convergence occurs. This is apparent from Figure~\ref{Ex.1_Fig.1} where the relative residual norms produced by the Newton-Kleinman method with and without line search are plotted for the case $n=784$.
 We can appreciate how a monotone decrease in the relative residual norm is obtained if the line search is performed.

 \begin{figure}
  \centering
  \caption{Example~\ref{Ex.1}. Relative residual norms produced by the Newton-Kleinman with and without line search for $n=784$.}
  \label{Ex.1_Fig.1}
  	\begin{tikzpicture}
    \begin{semilogyaxis}[width=0.8\linewidth, height=.27\textheight,
      legend pos = south west,
      xlabel = $k$, ylabel = $\|\mathcal{R}_T(X_k)\|_F/\|C_1^TC_2\|_F$,xmin=0, xmax=10, ymax = 1e4]
      \addplot+[thick] table[x index=0, y index=1]  {data1.dat};
       \addplot+ [thick]table[x index=0, y index=2]  {data1.dat};
        \legend{w/o line search,w/ line search};
    \end{semilogyaxis}
  \end{tikzpicture}

\end{figure}
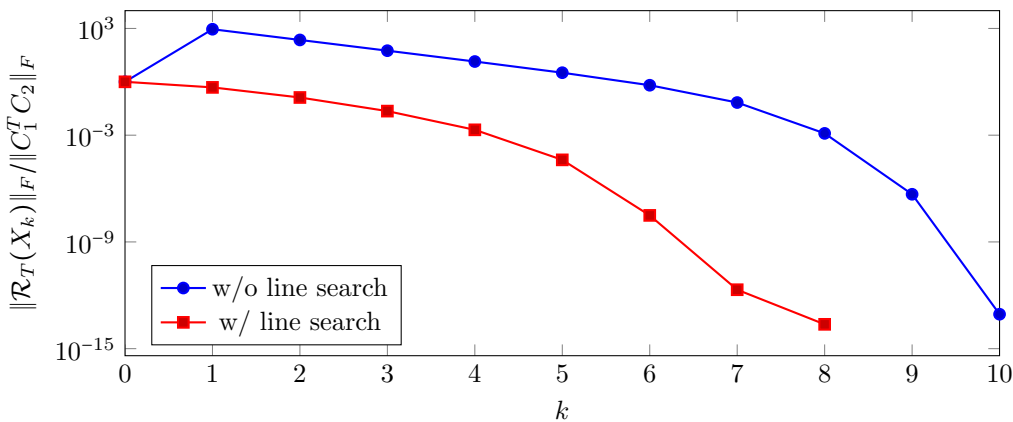

 In the large-scale setting, we consider low-rank matrices $B=B_1B_2^T$, $B_1,B_2\in\mathbb{R}^{n\times p}$ and $C=C_1C_2^T$, $C_1,C_2\in\mathbb{R}^{n\times q}$, such that $B_i$, $C_i$ have unit norm and random entries for $i=1,2$. The matrices $A$ and $D$ are as before.

 In Table~\ref{tab1.1} we report the results for different values of $p$, $q$ and $n$.

 \begin{table}[!ht]
 \centering
 \caption{Example \ref{Ex.1}. Results for different values of $p$, $q$ and $n$. \label{tab1.1}}
\begin{tabular}{|r r r r r r r r|}
 \hline
 $n$ & $p$ & $q$ &  Its (inner) & Mem. & Rank($X$)& Rel. Res. & Time (secs) \\
 \hline
\multirow{ 3}{*}{10,000}& 1 & 1 & 13 (6.46) & 160 & 28 & 8.33e-7 & 15.65 \\

&1 & 5 & 6 (6.66) & 624 & 87 & 5.14e-7 & 52.15 \\

& 5 & 10 & 6 (6.00) & 1,560 & 186 & 4.39e-7 & 110.12 \\
 \hline

\multirow{ 3}{*}{22,500}& 1 & 1 & 15 (10.60) & 352 & 26 & 5.18e-7 & 69.19 \\

&1 & 5 & \multicolumn{5}{c|}{convergence not achieved}\\

&5 & 10 & \multicolumn{5}{c|}{convergence not achieved}\\
\hline

\multirow{ 3}{*}{32,400}& 1 & 1 & \multicolumn{5}{c|}{convergence not achieved} \\

&1 & 5 & \multicolumn{5}{c|}{convergence not achieved}\\

&5 & 10 & \multicolumn{5}{c|}{convergence not achieved}\\
\hline

\end{tabular}
 \end{table}

 We notice that for the largest values of $n$, the inexact Newton-Kleinman method does not always achieve the desired accuracy in terms of relative residual norm.
 Indeed, for a certain $k>0$, Algorithm~\ref{T_EKSM} does not manage to solve the $k$-th equation \eqref{Newton_step}
  of the Newton-Kleinman scheme\footnote{Some examples where Algorithm~\ref{T_EKSM} does not converge are reported also in \cite{Dopico2016}.} and we thus stop the process. In Figure~\ref{fig:2} (left) we plot in logarithmic scale the T-Riccati relative residual norm for the case $n=22,500$ and $p=1$, $q=5$. For this example, the residual norm decreases (non monotonically) until Algorithm~\ref{T_EKSM} is no longer able to solve the eighth T-Sylvester equation
  \begin{equation}\label{ex1.eq}
(D-X_7^TB_1B_2^T)\widetilde X_8+\widetilde X_8^T(A-B_1B_2^TX_7)^T=-X_7^TB_1B_2^TX_7-C_1^TC_2.
  \end{equation}
 In particular, in Figure~\ref{fig:2} (right), the relative residual norm (solid line) produced by Algorithm~\ref{T_EKSM} when applied to equation \eqref{ex1.eq} is reported. We can appreciate how the residual norm smoothly decreases in the first 18 iterations and, after an erratic phase, it starts increasing until the 35th iteration when we stop the procedure. In Figure~\ref{fig:2} (right) we also plot the threshold (dashed line) passed to Algorithm~\ref{T_EKSM}, i.e., \mbox{$\eta_7\cdot\|\mathcal{R}_T(X_7)\|_F$}, and we can realize how the relative residual norm gets very close to the desired accuracy without reaching it. A similar behaviour has been observed also for the other tests where the convergence has not been achieved.
 We think it may be interesting to further study the convergence property of Algorithm~\ref{T_EKSM} as also the solution of the T-Riccati equation \eqref{eq.TRiccati} can benefit from this.

  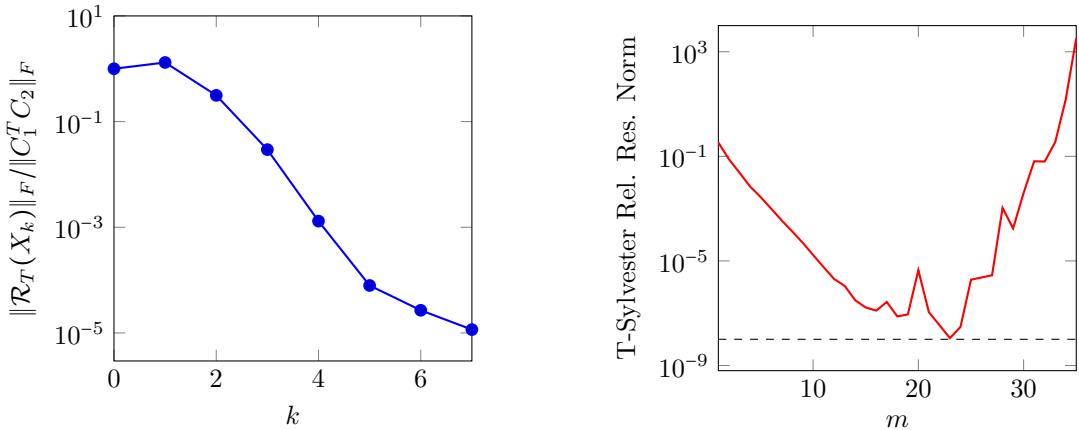
\begin{figure}
  \centering
  \caption{Example~\ref{Ex.1}, $n=22,500$, $p=1$ and $q=5$. Left: $\|\mathcal{R}_T(X_k)\|_F/\|C_1^TC_2\|_F$ for $k=0,\ldots,7$. Right: relative residual norm produced by Algorithm~\ref{T_EKSM} when applied to equation \eqref{ex1.eq} (solid line) and $\eta_7\cdot\|\mathcal{R}_T(X_7)\|_F$ (dashed line).} \label{fig:2}
  \begin{minipage}{.48\linewidth}
  	\centering
  	\begin{tikzpicture}
    \begin{semilogyaxis}[width=0.8\linewidth, height=.27\textheight,
      xlabel = $k$, ylabel = $\|\mathcal{R}_T(X_k)\|_F/\|C_1^TC_2\|_F$,xmin=0, xmax=7, ymax = 1e1]
      \addplot+[thick] table[x index=0, y index=1]  {data2.dat};
       \end{semilogyaxis}
  \end{tikzpicture}

  \end{minipage}~\begin{minipage}{0.480\textwidth}  	
    \bigskip

  \centering

  	\begin{tikzpicture}
    \begin{semilogyaxis}[width=0.8\linewidth, height=.27\textheight,
     xlabel = $m$, ylabel = {T-Sylvester Rel. Res. Norm },xmin=1, xmax=35, ymax = 1e4]
      \addplot+[thick,color=red, mark=red] table[x index=0, y index=1]  {data3.dat};
      \addplot+[mark=,fill=black,color=black,dashed] table[x index=0, y index=2]  {data3.dat};

       \end{semilogyaxis}
  \end{tikzpicture}
  \end{minipage}

\end{figure}
 When the desired accuracy is achieved, the rank of the computed numerical solution $X$ is rather small compared to the problem size $n$, for all the tested values of $p$ and $q$. This suggests that it may be reasonable to investigate in depth the trend of the singular values of the exact solution to \eqref{eq.TRiccati} in order to justify the search for low-rank approximate solutions and the development of low-rank numerical schemes.

 }
\end{num_example}

\begin{num_example}\label{Ex.2}
{\rm
 The second example we consider
 consists in a slight modification of \cite[Example 6.1]{Guo2001}.

 In the small-scale setting we generate a random matrix $R=\mathtt{rand}(2n,2n)\in\mathbb{R}^{2n\times 2n}$ and define $W=\text{diag}(R\mathbf{1})-R$ where $\mathbf{1}=(1,\ldots,1)^T\in\mathbb{R}^{2n}$. Then $A,D\in\mathbb{R}^{n\times n}$ are chosen according to the partition
 $$W=\begin{bmatrix}
      D & M \\
      N & A \\
     \end{bmatrix},
     $$
and $B=-N/\|N\|_F$. We also define the exact solution to \eqref{eq.TRiccati} as
an $n\times n$ matrix $X_{\mathtt{exact}}$ with random entries and unit norm and we compute $C=DX_{\mathtt{exact}}+X_{\mathtt{exact}}^TA-X_{\mathtt{exact}}^TBX_{\mathtt{exact}}$.

The results for different $n$ are collected in Table~\ref{tab2} where we also report the relative error between the computed solution and $X_{\mathtt{exact}}$.

 \begin{table}[!ht]
 \centering
 \caption{Example \ref{Ex.2}. Results for different values of (moderate) $n$. \label{tab2}}
\begin{tabular}{|r| r r r r r|}
 \hline
 & $n$  & Its & Rel. Res. & Err. Rel. & Time (secs) \\
 \hline
 w/o line search & \multirow{ 2}{*}{500} & 3 & 1.06e-14& 7.78e-11 & 10.80 \\
 w/ line search & & 3 & 3.48e-13 & 6.01e-10 & 10.84 \\
\hline
 w/o line search & \multirow{ 2}{*}{1,000} & 3 & 1.49e-14& 9.33e-10 & 78.59  \\
 w/ line search & & 3 & 1.78e-13& 1.45e-9 & 78.99 \\
\hline

\end{tabular}
 \end{table}

 The Newton-Kleinman method with line search performs in a very similar manner with respect to the case where no line search is used. Indeed, in this example, the computed step-size $\lambda_k$ is always close to one, for every $k$.

 For the large-scale setting we have to construct the coefficient matrices in a different way to be able to allocate them.
To this end we compute two sparse matrices $F,G\in\mathbb{R}^{n\times n}$ with random entries via the MATLAB function {\tt sprand}\footnote{The density of the nonzero entries is set to be equal to $1/n$.} and we shift them to ensure their nonsingularity. We thus
 define $D=F+(\rho(F)+1)I$ and
 $A=G+(\rho(G)+20)I$. As in Example~\ref{Ex.1}, we consider low-rank matrices $B=B_1B_2^T$, $B_1,B_2\in\mathbb{R}^{n\times p}$ and $C=C_1C_2^T$, $C_1,C_2\in\mathbb{R}^{n\times q}$ such that $B_i$, $C_i$ have unit norm and random entries for $i=1,2$.

 In Table~\ref{tab2.1} we report the results for different values of $p$, $q$ and $n$.

 \begin{table}[!ht]
 \centering
 \caption{Example \ref{Ex.2}. Results for different values of $p$, $q$ and $n$. \label{tab2.1}}
\begin{tabular}{|r r r r r r r r|}
 \hline
 $n$& $p$ & $q$ &  Its (inner) & Mem. & Rank($X$)& Rel. Res. & Time (secs) \\
 \hline
\multirow{ 3}{*}{10,000}& 1 & 1 &4 (1.5) & 32 & 4 & 6.19e-7 & 0.16 \\
&1 & 5 & 5 (1.8) & 144 & 29 & 1.18e-8 & 1.11 \\
& 5 & 10 & 5 (1.8) & 360 & 60 & 2.35e-9 & 3.27 \\
 \hline

 \multirow{ 3}{*}{50,000}& 1 & 1 &4 (1.5) & 32 & 4 & 6.48e-7 & 0.79 \\
&1 & 5 & 5 (1.8) & 144 & 29 & 1.18e-8 & 5.44 \\
& 5 & 10 & 5 (1.8) & 360 & 60 & 1.19e-9 & 14.88 \\
 \hline

 \multirow{ 3}{*}{100,000}& 1 & 1 &4 (1.5) & 32 & 4 & 6.30e-9 & 1.48 \\
&1 & 5 & 5 (1.8) & 144 & 28 & 1.80e-8 & 11.33 \\
& 5 & 10 & 5 (1.8) & 360 & 60 & 4.71e-10 & 24.49 \\
 \hline

\end{tabular}
 \end{table}

 In this example, we manage to reach the desired accuracy for every value of $p$, $q$ and $n$ we tested. Moreover, the numerical solution turns out to be low-rank in all the experiments we ran.

 We notice that the computational timings in Table~\ref{tab2.1} are several orders of magnitude smaller than the ones reported in Table~\ref{tab1.1} even when the problem dimension, the rank of $B$ and $C$ and the number of outer iterations are very similar. This is mainly due to the following factors. The average numbers of inner iterations in Table~\ref{tab1.1} is larger than the ones
 reported in Table~\ref{tab2.1}. Therefore, even if we solve a similar number of T-Sylvester equations to converge, the ones
 in Example~\ref{Ex.1} require a larger space to be solved leading to an increment in both the memory allocation and the computational efforts. Moreover, each of these inner iterations is more expensive than a single inner iteration with the data of Example~\ref{Ex.2} because of
 the different level of fill in of the coefficient matrices. For instance, for $n=10,000$, the number of nonzero entries of $A$ and $D$ in Example~\ref{Ex.1} is approximately 50,000 while in Example~\ref{Ex.2} is 20,000.
 }
\end{num_example}

\section{Conclusions}\label{Conlusions}
By taking inspiration from the rich literature about the algebraic Riccati equation, in this paper we investigated some theoretical and computational aspects of the nonsymmetric T-Riccati equation.
Sufficient conditions for the existence and uniqueness of a minimal nonnegative solution $X_{\min}$ have been provided. We have thoroughly explored the numerical computation of $X_{\min}$ and effective procedures for both small and large problem dimensions have been proposed. The reliability of the derived schemes has been established by showing their convergence to $X_{\min}$ whereas several numerical experiments illustrate their efficiency in terms of both memory requirements and computational time.

In the large-scale setting, low-rank approximate solutions turned out to be accurate in terms of relative residual norm. This suggests that it may be possible to show that the exact solution $X_{\min}$ presents a fast decay in its singular values and this will be the topic of future works. The projection scheme adopted to solve the T-Sylvester equations arising from the Newton-Kleinman iteration failed to converge in some cases so that the solution to the T-Riccati equation
could not be computed. A robust convergence theory for large-scale T-Sylvester equations solvers is still lacking in the literature and we think it
may be a very interesting research topic as also the numerical procedure for T-Riccati equations presented in this paper can benefit from it.

The promising results encourage us to tackle more difficult problems with data coming from real-life applications as the ones discussed in section~\ref{Introduction}.

\section*{Acknowledgments}
This work was inspired by Don Harding (Victoria University, Melbourne, Australia) in the context of the ARC Discovery Grant No DP1801038707 ``New methods for solving large models with rational expectations'' in which the first author serves as a consultant.
Moreover, we wish to thank Froil\'{a}n Dopico and Valeria Simoncini
for providing us with the MATLAB implementations of Algorithm 3.1 in \cite{DeTeran2011} and Algorithm 2 in \cite{Dopico2016}, respectively.

 The second author is member of the Italian INdAM Research group GNCS.
\bibliography{TRiccati}

\end{document}